\newtheorem{theorem}{Theorem}[section]
\newtheorem{lemma}[theorem]{Lemma}
\newtheorem{proposition}[theorem]{Proposition}
\newtheorem{conjecture}[theorem]{Conjecture}
\theoremstyle{definition}
\theoremstyle{remark}
\newtheorem{remark}[theorem]{Remark}
\numberwithin{equation}{section}
\def\limfunc#1{\mathop{\rm #1}\nolimits}
\begin{document}

\title[Mirror coupling of RBM and applications]{Mirror coupling of reflecting Brownian motion and an application to
Chavel's conjecture}
\author{Mihai N. Pascu}
\address{Faculty of Mathematics and Computer Science, Transilvania University of Bra\c{s}ov, Bra\c{s}ov -- 500091, ROMANIA}
\email{mihai.pascu@unitbv.ro}
\thanks{The author kindly acknowledges the support from CNCSIS-UEFISCSU research grant PNII - IDEI 209.}

\subjclass[2000]{Primary 60J65, 60H20. Secondary 35K05, 60H30.} %
\keywords{couplings, mirror coupling, reflecting Brownian motion, Chavel's conjecture.} %
\maketitle

\begin{abstract}
In a series of papers, Burdzy et. al. introduced the \emph{mirror
coupling} of reflecting Brownian motions in a smooth bounded domain
$D\subset \mathbb{R}^{d}$, and used it to prove certain properties
of eigenvalues and eigenfunctions of the Neumann Laplaceian on $D$.

In the present paper we show that the construction of the mirror
coupling can be extended to the case when the two Brownian motions
live in different domains $D_{1},D_{2}\subset \mathbb{R}^{d}$.

As an application of the construction, we derive a unifying proof of
the two main results concerning the validity of Chavel's conjecture
on the domain monotonicity of the Neumann heat kernel, due to I.
Chavel (\cite{Chavel}), respectively W. S. Kendall (\cite{Kendall}).
\end{abstract}

\section{Introduction}

The technique of coupling of reflecting Brownian motions is a useful
tool used by several authors in connection to the study of the
Neumann heat kernel of the corresponding domain (see
\cite{AtarBurdzy2}, \cite{Banuelos - On the Hot Spots},
\cite{Burdzy}, \cite{CarmonaZheng}, \cite{Kendall},
\cite{Pascu-scaling}, etc).

In a series of paper, Krzysztof Burdzy et. al. ( \cite{AtarBurdzy1},
\cite{AtarBurdzy2}, \cite{Banuelos - On the Hot Spots},
\cite{Burdzy}, \cite{Burdzy-Kendall},) introduced the \emph{mirror
coupling} of reflecting Brownian motions in a smooth domain
$D\subset \mathbb{R}^{d}$ and used it in order to derive properties
of eigenvalues and eigenfunctions of the Neumann Laplaceian on $D$.

In the present paper, we show that the mirror coupling can be extended to
the case when the two reflecting Brownian motions live in different domains $%
D_{1},D_{2}\subset \mathbb{R}^{d}$.

The main difficulty in the extending the construction of the mirror
coupling comes from the fact that the stochastic differential
equation(s) describing the mirror coupling has a singularity at the
times when coupling occurs. In the case $D_{1}=D_{2}=D$ considered
by Burdzy et. all. this problem is not a major
problem (although the technical details are quite involved, see \cite%
{AtarBurdzy2}), since after the coupling time the processes move
together. In the case $D_{1}\neq D_{2}$ however, this is a major
problem: after processes have coupled, it is possible for them to
decouple (for example in the case when the processes are coupled and
they hit the boundary of one of the domains).

It is worth mentioning that the method used for proving the existence of the
solution is new, and it relies on the additional hypothesis that the smaller
domain $D_{2}$ (or more generally $D_{1}\cap D_{2}$) is a convex domain.
This hypothesis allows us to construct an explicit set of solutions in a
sequence of approximating polygonal domains for $D_{2}$, which converge to
the desired solution.

As an application of the construction, we will derive a unifying
proof of the two most important results on the challenging Chavel's
conjecture on the domain monotonicity Neumann heat kernel
(\cite{Chavel}, \cite{Kendall}), which also gives a possible new
line of approach for this conjecture (note that by the results in
\cite{Bass-Burdzy}, Chavel's conjecture does not hold in its full
generality, but the additional hypotheses under which this
conjecture holds are not known at the present moment).

The structure of the paper is as follows: in Section \ref{Mirror
coupling of RBM} we briefly describe the construction of Burdzy et.
al. of the mirror coupling in a smooth bounded domain domain
$D\subset \mathbb{R}^{d}$.

In Section \ref{Extensions of mirror coupling}, in Theorem \ref{main
theorem}, we give the main result which shows that the mirror
coupling can be extended to the case when $\overline{D_{2}}\subset
D_{1}$ are smooth bounded domains in $\mathbb{R}^{d}$ and $D_{2}$ is
convex (some extensions of the theorem are presented in Section
\ref{Extensions and applications}).

Before proceeding with the proof of theorem, in Remark
\ref{reduction to the case D_1=R} we show that the proof of the
theorem can be reduced to the case when $D_{1}=\mathbb{R}^{d}$.
Next, in Section \ref{1-dimensional case}, we show that in the case
$D_{2}=\left( 0,\infty \right) \subset D_{1}=R$ the solution is
essentially given by Tanaka's formula (Remark \ref{remark on
Tanaka formula}), and then we give the proof of the main theorem in the $1$%
-dimensional case (Proposition \ref{Prop 1-dim case}).

In Section \ref{Case of polygonal domains}, we first prove the existence of
the mirror coupling in the case when $D_{2}$ is a half-space in $\mathbb{R}%
^{d}$ and $D_{1}=\mathbb{R}^{d}$ (Lemma \ref{lemma for a
half-space}), and then we use this result in order to prove the
existence of the mirror coupling in the case when $D_{2}$ is a
convex polygonal domain in $\mathbb{R}^{d}$ and $
D_{1}=\mathbb{R}^{d}$ (Theorem \ref{coupling in polygonal domains}).
Some of the properties of coupling, essential for the extension to
the general case are detailed in Proposition \ref{properties of
mirror coupling in polygonal domains}.

In Section \ref{The proof of the main theorem} we give the proof of
the main Theorem \ref{main theorem}. The idea of the proof is to
construct a sequence $\left( Y_{t}^{n},X_{t}\right) $ of mirror
couplings in $\left( D_{n},D_{1}\right) $, where $D_{n}\nearrow
D_{2}$ is a sequence of convex polygonal domains in $\mathbb{R}^d$,
and to use the properties of the mirror couplings in polygonal
domains (Proposition \ref{properties of mirror coupling in polygonal
domains}) in order to show that the sequence $Y_{t}^{n}$ converges
to a process $Y_{t}$, which gives the desired solution to the
problem.

The last section of the paper (Section \ref{Extensions and applications}) is
devoted to discussing the applications and the extensions of the mirror
coupling constructed in Theorem \ref{main theorem}. First, in Theorem \ref%
{Theorem on Chavel's conjecture} we use the mirror coupling in order to give
a simple, unifying proof of the results of I. Chavel and W. S. Kendall on
the domain monotonicity of the Neumann heat kernel (Chavel's Conjecture \ref%
{Chavel's conjecture}). The proof is probabilistic in spirit, relying on the
geometric properties of the mirror coupling.

In Remark \ref{possible extensions of Chavel}, we discuss the
equivalent analytic counterpart of the proof Theorem \ref{Theorem on
Chavel's conjecture}, which might give a possible new line of
approach for extending Chavel's conjecture to other classes of
domains.

Without giving all the technical details, we discuss the extension of the
mirror coupling to other classes of domains (smooth bounded domains $%
D_{1,2}\subset \mathbb{R}^{d}$ with non-tangential boundaries, such that $%
D_{1}\cap D_{2}$ is a convex domain).

The paper concludes with a discussion on the (non) uniqueness of the mirror
coupling. It is shown here that the lack of uniqueness is due to the fact
that after coupling, the processes might decouple, not only on the boundary
of the domain, but even when they are inside of it.

The two basic solutions give rise to the \emph{sticky}, respectively \emph{%
non-sticky} mirror couplings, and there is a whole range of
intermediate possibilities. The stickiness refers to the fact that
after coupling the processes \textquotedblleft
stick\textquotedblright\ to each other as long as possible (this is
the coupling constructed in Theorem \ref{main theorem}) hence the
name \textquotedblleft sticky\textquotedblright\ mirror coupling, or
they can split apart immediately after coupling, in the case of
\textquotedblleft non-sticky\textquotedblright\ mirror coupling, the
general case (\emph{weak/mild} mirror couplings) being a mixture of
these two basic behaviors.

We developed the extension of the mirror coupling having in mind the
application to Chavel's conjecture, for which the sticky mirror
coupling is the \textquotedblleft right\textquotedblright\ tool, but
perhaps the other mirror couplings (the non-sticky and the mild
mirror couplings) might prove useful in other applications.

\section{Mirror couplings of reflecting Brownian motions\label{Mirror
coupling of RBM}}

Reflecting Brownian motion in a smooth domain $D\subset \mathbb{R}^{d}$ can
be defined as a solution of the stochastic differential equation%
\begin{equation}
X_{t}=x+B_{t}+\int_{0}^{t}\nu _{D}\left( X_{s}\right) dL_{s}^{X},
\end{equation}%
where $B_{t}$ is a $d$-dimensional Brownian motion, $\nu _{D}$ is the inward
unit normal vector field on $\partial D$ and $L_{t}^{X}$ is the boundary
local time of $X_{t}$ (the continuous non-decreasing process which increases
only when $X_{t}\in \partial D$).

In \cite{AtarBurdzy1}, the authors introduced the \emph{mirror coupling} of
reflecting Brownian motion in a smooth domain $D\subset \mathbb{R}^{d}$
(piecewise $C^{2}$ domain in $\mathbb{R}^{2}$ with a finite number of convex
corners or a $C^{2}$ domain in $\mathbb{R}^{d}$, $d\geq 3$).

They considered the following system of stochastic differential equations:%
\begin{eqnarray}
X_{t} &=&x+W_{t}+\int_{0}^{t}\nu _{D}\left( X_{s}\right) dL_{s}^{X} \\
Y_{t} &=&y+Z_{t}+\int_{0}^{t}\nu _{D}\left( X_{s}\right) dL_{s}^{Y} \\
Z_{t} &=&W_{t}-2\int_{0}^{t}\frac{Y_{s}-X_{s}}{\left\vert \left\vert
Y_{s}-X_{s}\right\vert \right\vert ^{2}}\left( Y_{s}-X_{s}\right) \cdot
dW_{s}  \label{Z-W relation}
\end{eqnarray}%
for $t<\xi $, where $\xi =\inf \left\{ s>0:X_{s}=Y_{s}\right\} $ is the
coupling time of the processes, after which the processes $X$ and $Y$ evolve
together, i.e. $X_{t}=Y_{t}$ for $t\geq \xi $ and $Z_{t}=Z_{\xi }+1_{t\geq
\xi }\left( W_{t}-W_{\xi }\right) $.

In the notation of \cite{AtarBurdzy1}, considering the Skorokhod map $\Gamma
:C\left( [0,\infty ):\mathbb{R}^{d}\right) \rightarrow C\left( [0,\infty ):%
\bar{D}\right) $, we have $X=\Gamma \left( x+W\right) $ and $Y=\Gamma \left(
y+Z\right) $, and the above system reduces to
\begin{equation}
Z_{t}=\int_{0}^{t\wedge \xi }G\left( \Gamma \left( y+Z\right) _{s}-\Gamma
\left( x+W\right) _{s}\right) dW_{s}+1_{t\geq \xi }\left( W_{t}-W_{\xi
}\right) ,  \label{Diff eq for Z}
\end{equation}%
where $\xi =\inf \left\{ t\geq 0:\Gamma \left( x+W_{s}\right) =\Gamma \left(
y+Z_{s}\right) \right\} $, for which the authors proved the pathwise
uniqueness and the strong uniqueness of the process $Z_{t}$ (given the
Brownian motion $W_{t}$).

In the above $G:\mathbb{R}^{d}\rightarrow \mathcal{M}_{d\times d}$ denotes
the function defined by
\begin{equation}
G\left( z\right) =\left\{
\begin{tabular}{ll}
$H\left( \frac{z}{\left\vert z\right\vert }\right) ,\qquad $ & if $z\neq 0$
\\
$0,$ & if $z=0$%
\end{tabular}%
\right.  \label{def of G}
\end{equation}%
where for a unitary vector $m\in \mathbb{R}^{d}$, $H\left( m\right) $
represents the linear transformation given by the $d\times d$ matrix
\begin{equation}
H\left( m\right) =I-2m~m^{\prime },
\end{equation}%
that is%
\begin{equation}
H\left( m\right) v=v-2\left( m\cdot v\right) m  \label{def of H}
\end{equation}%
is the mirror image of $v\in \mathbb{R}^{d}$ with respect to the hyperplane
through the origin perpendicular to $m$ ($m^{\prime }$ denotes the transpose
of the vector $m$, vectors being considered as column vectors).

The pair $\left( X_{t},Y_{t}\right) _{t\geq 0}$ constructed above is called
a \emph{mirror coupling }of reflecting Brownian motions in $D$ starting at $%
x,y\in \bar{D}$.

\begin{remark}
The relation (\ref{Z-W relation}) can be written%
\begin{equation*}
dZ_{t}=G\left( \frac{X_{t}-Y_{t}}{\left\vert \left\vert
X_{t}-Y_{t}\right\vert \right\vert }\right) dW_{t},
\end{equation*}%
which shows that for $t<\xi $ the increments of $Z_{t}$ are mirror images of
the increments of $W_{t}$ with respect to the line of symmetry $M_{t}$ of $%
X_{t}$ and $Y_{t}$, which justifies the name of \emph{mirror coupling}.
\end{remark}

\section{Extension of the mirror coupling\label{Extensions of mirror
coupling}}

The main contribution of the author is the observation that the
mirror coupling introduced above can be extended to the case when
the two reflecting Brownian motion have different state spaces, that
is when $X_{t}$ is a reflecting Brownian motion in $D_{1}$ and
$Y_{t}$ is a reflecting Brownian motion in $D_{2}$. Although the
construction can be carried out in a more general setup (see the
concluding remarks in Section \ref{Extensions and applications}), in
the present section we will restrict to the case when one of the
domains is strictly contained in the other one.

The main result is the following:

\begin{theorem}
\label{main theorem}Let $D_{1,2}\subset \mathbb{R}^{d}$ be smooth bounded
domains (piecewise $C^{2}$-smooth boundary with convex corners in $\mathbb{R}%
^{2}$, or $C^{2}$-smooth boundary in $\mathbb{R}^{d}$, $d\geq 3$ will
suffice) with $\overline{D_{2}}\subset D_{1}$ and $D_{2}$ convex domain, and
let $x\in \bar{D}_{1}$ and $y\in \bar{D}_{2}$ be arbitrarily fixed points.
Given a $d$-dimensional Brownian motion $\left( W_{t}\right) _{t\geq 0}$
starting at $0$ on a probability space $\left( \Omega ,\mathcal{F},P\right) $%
, there exists a strong solution of the following system of stochastic
differential equations%
\begin{eqnarray}
X_{t} &=&x+W_{t}+\int_{0}^{t}\nu _{D_{1}}\left( X_{s}\right) dL_{s}^{X}
\label{1} \\
Y_{t} &=&y+Z_{t}+\int_{0}^{t}\nu _{D_{2}}\left( Y_{s}\right) dL_{s}^{Y}
\label{2} \\
Z_{t} &=&\int_{0}^{t}G\left( Y_{s}-X_{s}\right) dW_{s}  \label{3}
\end{eqnarray}%
or equivalent%
\begin{equation}
Z_{t}=\int_{0}^{t}G\left( \tilde{\Gamma}\left( y+Z\right) _{s}-\Gamma \left(
x+W\right) _{s}\right) dW_{s},  \label{4}
\end{equation}%
where $\Gamma $ and $\tilde{\Gamma}$ denote the corresponding Skorokhod maps
which define the reflecting Brownian motion $X=\Gamma \left( x+W\right) $ in
$D_{1}$, respectively $Y=\tilde{\Gamma}\left( y+Z\right) $ in $D_{2}$, and $%
G:\mathbb{R}^{d}\rightarrow \mathcal{M}_{d\times d}$ denotes the following
modification of the function $G$ defined in the previous section:%
\begin{equation}
G\left( z\right) =\left\{
\begin{tabular}{ll}
$H\left( \frac{z}{\left\vert z\right\vert }\right) ,\qquad $ & if $z\neq 0$
\\
$I,$ & if $z=0$%
\end{tabular}%
\right. .  \label{new def of G}
\end{equation}
\end{theorem}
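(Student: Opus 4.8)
The plan is to build the solution by approximation, following the reduction and the hierarchy of special cases announced above. First I would invoke Remark~\ref{reduction to the case D_1=R} to reduce to $D_{1}=\mathbb{R}^{d}$, so that $X_{t}=x+W_{t}$ is an ordinary Brownian motion carrying no reflection term, and the only reflection to control is that of $Y$ on $\partial D_{2}$. The convexity of $D_{2}$ then lets me fix an increasing sequence $D_{n}\nearrow D_{2}$ of convex polygonal domains exhausting $D_{2}$, and the strategy is to construct a mirror coupling $\left( Y_{t}^{n},X_{t}\right) $ in each pair $\left( D_{n},\mathbb{R}^{d}\right) $ and then to show that $Y_{t}^{n}$ converges, as $n\to \infty $, to a process $Y_{t}$ which together with $X_{t}$ solves \eqref{1}--\eqref{3}.

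The base cases supply the couplings in the polygonal domains. For a single half-space the reflecting Brownian motion is obtained by reflecting one coordinate, and the mirror coupling reduces essentially to the one-dimensional situation, where Tanaka's formula gives an explicit solution (Remark~\ref{remark on Tanaka formula} and Proposition~\ref{Prop 1-dim case}); this is Lemma~\ref{lemma for a half-space}. A convex polygonal domain is a finite intersection of half-spaces, so I would construct the coupling there by concatenating the half-space couplings across the faces (Theorem~\ref{coupling in polygonal domains}): away from $\partial D_{n}$ the pair evolves as the free mirror coupling, and at each boundary visit only the reflection in the active face is added. Convexity guarantees that the inward normals at the corners do not conflict, so the Skorokhod problem defining $Y^{n}$ is well posed and the construction does not break down at the corners.

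The heart of the argument is the passage to the limit, for which I would rely on the geometric properties of the polygonal couplings collected in Proposition~\ref{properties of mirror coupling in polygonal domains}. The key quantities to control are the inter-point distance $\left\vert Y_{t}^{n}-X_{t}\right\vert $ and the discrepancy $\left\vert Y_{t}^{n}-Y_{t}^{m}\right\vert $ between successive levels. The mirror coupling is designed precisely so that $\left\vert Y_{t}^{n}-X_{t}\right\vert $ is non-increasing until coupling, and reflection in the convex $D_{n}$ is distance-nonincreasing; these should yield a monotonicity or a uniform modulus of continuity making $\left( Y^{n}\right) $ Cauchy, uniformly on compact time intervals. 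Having produced the limit $Y_{t}$, I would set $Z_{t}=\int_{0}^{t}G\left( Y_{s}-X_{s}\right) dW_{s}$ and verify that $Y=\tilde{\Gamma}\left( y+Z\right) $ by passing to the limit in the Skorokhod maps and in the stochastic integrals $Z_{t}^{n}=\int_{0}^{t}G\left( Y_{s}^{n}-X_{s}\right) dW_{s}$.

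The main obstacle is exactly the one flagged in the introduction: the integrand $G$ is discontinuous at the coupling set $\left\{ Y=X\right\} $, and because $\overline{D_{2}}\subset D_{1}$ the coupled processes may decouple, for instance when $Y$ reflects off $\partial D_{2}$ while $X$ is still interior. Consequently the convergence $Z^{n}\to Z$ cannot be read off from continuity of $G$ alone; I expect to need the modified definition \eqref{new def of G} (taking $G\left( 0\right) =I$ rather than $0$), together with a bound showing that the time $Y^{n}-X$ spends near the origin is negligible, so that $\int_{0}^{t}G\left( Y_{s}^{n}-X_{s}\right) dW_{s}\to \int_{0}^{t}G\left( Y_{s}-X_{s}\right) dW_{s}$ despite the jump of $G$ on the coupling set. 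Establishing this control on the occupation of the singular set, uniformly in $n$, is where the convexity hypothesis and the explicit polygonal construction do the essential work.
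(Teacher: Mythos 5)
Your proposal follows the same route as the paper: reduce to $D_{1}=\mathbb{R}^{d}$ via Remark \ref{reduction to the case D_1=R}, construct the coupling explicitly in half-spaces and convex polygonal domains, and pass to the limit along an exhausting sequence $D_{n}\nearrow D_{2}$. However, two of the steps you sketch would not survive being made precise. First, the Cauchy property of $\left( Y^{n}\right) $ does not follow from monotonicity of $\left\vert Y_{t}^{n}-X_{t}\right\vert $; the substantive input is the comparison estimate of Proposition \ref{properties of mirror coupling in polygonal domains} (iv), namely $\sup_{s\leq t}\left\vert Y_{s}^{\alpha }-Y_{s}^{\beta }\right\vert \leq \mathrm{Dist}\left( D_{\alpha },D_{\beta }\right) $ for two polygonal couplings driven by the same $X$, proved by tracking the rotation (or translation) relating the two mirrors and showing it can only decrease except when $Y^{\alpha }$ is pushed away from $Y^{\beta }$ at $\partial D_{\alpha }$. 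Without this geometric lemma the compactness step has no support.

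Second, and more seriously, your proposed mechanism for $Z^{n}\rightarrow Z$ --- a bound showing that the time $Y^{n}-X$ spends near the origin is negligible --- is false for the sticky coupling: after coupling inside $D_{n}$ the processes move together for a positive amount of time, so $\left\{ s:Y_{s}^{n}=X_{s}\right\} $ has positive Lebesgue measure. The paper's argument is a case split instead: where $Y_{s}=X_{s}\in D$, property (iii) of Proposition \ref{properties of mirror coupling in polygonal domains} gives $Y_{s}^{n}=X_{s}$ for all large $n$, so both integrands equal $G\left( 0\right) =I$ and nothing needs to be controlled; the only genuinely bad set is $\left\{ s:Y_{s}=X_{s}\in \partial D\right\} $, where $\overline{D_{n}}\subset D$ forces $Y_{s}^{n}\neq X_{s}$ and hence $G\left( Y_{s}^{n}-X_{s}\right) \neq I$, and that set is Lebesgue-null because the reflecting Brownian motion $Y$ spends zero time on $\partial D$. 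Bounded convergence and Doob's inequality then give $Z^{n}\rightarrow Z$. Finally, you gloss over the identification of the limit of the reflection terms: the paper bounds $E\,L_{t}^{Y^{n}}\leq c\sqrt{t}$ uniformly in $n$ via Neumann heat kernel estimates to show the limiting finite-variation part exists, and then invokes the uniqueness of the semimartingale decomposition of $Y$ to conclude that this limit equals $\int_{0}^{t}\nu _{D}\left( Y_{s}\right) dL_{s}^{Y}$.
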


\begin{remark}
As it will follow from the proof of the theorem, with the choice of $G$
above, in the case $D_{1}=D_{2}=D$ the solution of the equation (\ref{4})
given by the theorem above is the same as the solution of the equation (\ref%
{Diff eq for Z}) considered by the authors in \cite{AtarBurdzy1} (as pointed
out by the authors, the choice of $G\left( 0\right) $ is irrelevant in this
case), and therefore the above theorem is a natural generalization of their
result to the case when the two processes live in different spaces. We will
refer to a solution $X_{t},Y_{t}$ given by the above theorem as a \emph{%
mirror coupling} of reflecting Brownian motions in $D_{1}$, respectively $%
D_{2}$, starting from $\left( x,y\right) \in \overline{D_{1}}\times
\overline{D_{2}}$ with driving Brownian motion $W_{t}$.

As we will see in Section \ref{Extensions and applications}, without
additional assumptions, the solution of (\ref{4}) is not pathwise unique.
This is due to the fact that the stochastic differential equation has a
singularity at the origin (i.e. at times when the coupling occurs); the
general mirror coupling can be thought as depending on a parameter which is
a measure of the stickiness of the coupling: once the processes $X_{t}$ and $%
Y_{t}$ have coupled, they can either move together until one of them hits
the boundary (\emph{sticky }mirror coupling - this is in fact the solution
constructed in the above theorem), or they can immediately split after
coupling (non-sticky mirror coupling), and there is a whole range of
intermediate possibilities (see the discussion at the end of Section \ref%
{Extensions and applications}).

As an application, in Section \ref{Extensions and applications} we
will use the former mirror coupling (the sticky mirror coupling) to
give a unifying proof of Chavel's conjecture on the domain
monotonicity of the Neumann heat kernel for domains $D_{1,2}$
satisfying the ball condition, although the other possible choices
for the mirror coupling might prove useful in other contexts.
\end{remark}

Before carrying out the proof, we begin with some preliminary
remarks which
will allow us to reduce the proof of the above theorem to the case $D_{1}=%
\mathbb{R}^{d}$.

\begin{remark}
The main difference from the case when $D_{1}=D_{2}=D$ considered by the
authors in \cite{AtarBurdzy1} is that after the coupling time $\xi $ the
processes $X_{t}$ and $Y_{t}$ may decouple. For example, if $t\geq \xi $ is
a time when $X_{t}=Y_{t}\in \partial D_{2}$, the process $Y_{t}$ being
conditioned to stay in $\overline{D_{2}}$, receives a push in the direction
of the inward unit normal to the boundary of $D_{2}$, while the process $%
X_{t}$ behaves like a free Brownian motion near this point (we assumed that $%
D_{2}$ is strictly contained in $D_{1}$), and therefore the processes $X$
and $Y$ will drift apart, that is they will \emph{decouple}. Also, as shown
in Section \ref{Extensions and applications}, because the function $G$ has a
discontinuity at the origin, it is possible that the solutions decouple even
inside the domain $D_{2}$, so, without additional assumptions, the mirror
coupling is not uniquely determined (there is no pathwise uniqueness of (\ref%
{4})).
\end{remark}

\begin{remark}
\label{reduction to the case D_1=R}To fix ideas, for an arbitrarily fixed $%
\varepsilon >0$ chosen small enough such that $\varepsilon <\limfunc{dist}%
\left( \partial D_{1},\partial D_{2}\right) $, we consider the sequence $%
\left( \xi _{n}\right) _{n\geq 1}$ of \emph{coupling times} and the sequence
$\left( \tau _{n}\right) _{n\geq 0}$ of times when the processes are $%
\varepsilon $-decoupled ($\varepsilon $\emph{-decoupling times}, or simply
\emph{decoupling times }by an abuse of language) defined inductively by%
\begin{eqnarray*}
\xi _{n} &=&\inf \left\{ t>\tau _{n-1}:X_{t}=Y_{t}\right\} , \\
\tau _{n} &=&\inf \left\{ t>\xi _{n}:\left\vert X_{t}-Y_{t}\right\vert
>\varepsilon \right\} ,
\end{eqnarray*}%
where $\tau _{0}=0$ and $\xi _{1}=\xi $ is the first coupling time.

To construct the general mirror coupling (that is, to prove the existence of
a solution to (\ref{1})-(\ref{3}) above, or equivalent to (\ref{4})), we
proceed as follows.

First note that on the time interval $\left[ 0,\xi \right] $, the arguments
used in the proof of Theorem 2 in \cite{AtarBurdzy1} (pathwise uniqueness
and the existence of a strong solution $Z$ of (\ref{4})) do not rely on the
fact that $D_{1}=D_{2}$, hence the same arguments can be used to prove the
existence of a strong solution of (\ref{4}) on the time interval $[0,\xi
_{1}]=\left[ 0,\xi \right] $. Indeed, given $W_{t}$, (\ref{1}) has a strong
solution which is pathwise unique (the reflecting Brownian motion $X_{t}$ in
$D_{1}$), and therefore the proof of pathwise uniqueness and the existence
of a strong solution of (\ref{4}) is the same as in \cite{AtarBurdzy1}
considering $D=D_{2}$. Also note that as pointed by the authors, the value $%
G\left( 0\right) $ is irrelevant in their proof, since the problem is
constructing the processes until they meet, that is for $Y_{t}-X_{t}\neq 0$,
for which the definition of $G$ coincides with (\ref{new def of G}).

Next, assuming the existence of a strong solution to (\ref{4}) on $\left[
\xi _{1},\tau _{1}\right] $ (and therefore on $\left[ 0,\tau _{1}\right] $),
since at time $\tau _{1}$ the processes are $\varepsilon >0$ units apart, we
can apply again the results in \cite{AtarBurdzy1} (with $\tilde{B}%
_{t}=B_{t+\tau _{1}}-B_{\tau _{1}}$ and starting points $X_{\tau _{1}}$ and $%
Y_{\tau _{1}}$) in order to obtain a strong solution of (\ref{4}) on the
time interval $\left[ \tau _{1},\xi _{2}\right] $, and therefore by patching
we obtain the existence of a strong solution of (\ref{4}) on the time
interval $\left[ 0,\xi _{2}\right] $.

For an arbitrarily fixed $t>0$, since only a finite number of
coupling/decoupling times $\xi _{n}$ and $\tau _{n}$ can occur in the time
interval $\left[ 0,t\right] $ (we use here the fact that $D_{2}$ is strictly
contained in $D_{1}$), it follows that there exists a strong solution to (%
\ref{4}) on $\left[ 0,t\right] $ for any $t>0$ (and therefore on $[0,\infty
) $), provided we show the existence of a strong solution of (\ref{4}) on $%
\left[ \xi _{n},\tau _{n}\right] $, $n\geq 1$.

In order to prove this claim, it suffices therefore to show that for any
starting points $x=y\in \bar{D}_{2}$ of the mirror coupling, there exists a
strong solution to (\ref{4}) until the $\varepsilon $-decoupling time $\tau
_{1}$. Since $\varepsilon <\limfunc{dist}\left( \partial D_{1},\partial
D_{2}\right) $, it follows that the process $X_{t}$ cannot reach the
boundary $\partial D_{1}$ before the $\varepsilon $-decoupling time $\tau
_{1}$, and therefore we can consider that $X_{t}$ is a free Brownian motion
in $\mathbb{R}^{d}$, that is we can reduce the proof of Theorem \ref{main
theorem} to the case when $D_{1}=\mathbb{R}^{d}$.
\end{remark}

We will first give the proof of the in the $1$-dimensional case, then we
will extend the construction to polygonal domains in $\mathbb{R}^{d}$, and
we will conclude with the proof in the general case.

\subsection{The $1$-dimensional case\label{1-dimensional case}}

From Remark \ref{reduction to the case D_1=R} it follows that in order to
construct the mirror coupling in the $1$-dimensional case, it suffices to
consider $D_{1}=\mathbb{R}$ and $D_{2}=\left( 0,a\right) $ and to construct
a strong solution for $t\leq \tau _{1}=\inf \left\{ s>0:\left\vert
X_{s}-Y_{s}\right\vert >\varepsilon \right\} $ of the following system:%
\begin{eqnarray}
X_{t} &=&x+W_{t}  \label{1-1dim} \\
Y_{t} &=&x+Z_{t}+L_{t}^{Y}  \label{2-1-dim} \\
Z_{t} &=&\int_{0}^{t}G\left( Y_{s}-X_{s}\right) dW_{s}  \label{3-1-dim}
\end{eqnarray}%
for an arbitrary choice $x\in \left[ 0,a\right] $ of the starting point of
the mirror coupling, where $\varepsilon \in \left( 0,a\right) $ is
arbitrarily small, $\left( W_{t}\right) _{t\geq 0}$ is a $1$-dimensional
Brownian motion starting at $W_{0}=0$ and the function $G:\mathbb{%
R\rightarrow \mathcal{M}}_{1\times 1}\equiv \mathbb{R}$ is given in this
case by%
\begin{equation*}
G\left( x\right) =\left\{
\begin{tabular}{ll}
$-1,\qquad $ & if $x\neq 0$ \\
$+1,$ & if $x=0$%
\end{tabular}%
\right. .
\end{equation*}

\begin{remark}
\label{remark on Tanaka formula}Before proceeding with the proof, it is
worth mentioning that the heart of the construction is Tanaka's formula. To
see this, consider for the moment $a=\infty $, and note that Tanaka formula
\begin{equation*}
\left\vert x+W_{t}\right\vert =x+\int_{0}^{t}\limfunc{sgn}\left(
x+W_{s}\right) dW_{s}+L_{t}^{0}\left( x+W\right)
\end{equation*}%
gives a representation of the reflecting Brownian motion $\left\vert
x+W_{t}\right\vert $ in which the increments of the martingale part of $%
\left\vert x+W_{t}\right\vert $ are the increments of $W_{t}$ when $%
x+W_{t}\in \lbrack 0,\infty )$, respectively the opposite (minus) of the
increments of $W_{t}$ in the opposite case ($L_{t}^{0}\left( x+W\right) $
denotes here the local time at $0$ of $x+W_{t}$).

Noting that the condition $x+W_{t}\in \lbrack 0,\infty )$ is the same as $%
\left\vert x+W_{t}\right\vert =x+W_{t}$, from the definition of the function
$G$ it follows that the above can be written in the form%
\begin{equation*}
\left\vert x+W_{t}\right\vert =x+\int_{0}^{t}G\left( \left\vert
x+W_{s}\right\vert -\left( x+W_{s}\right) \right) dW_{s}+L_{t}^{x+W},
\end{equation*}%
which shows that a strong solution to (\ref{1-1dim}) - (\ref{3-1-dim}) above
(in the case $a=\infty $) is given explicitly by $X_{t}=x+W_{t}$ and $%
Y_{t}=\left\vert x+W_{t}\right\vert $ (and $Z_{t}=\int_{0}^{t}\limfunc{sgn}%
\left( x+W_{s}\right) dW_{s}$).
\end{remark}

We have the following:

\begin{proposition}
\label{Prop 1-dim case}Given a $1$-dimensional Brownian motion $\left(
W_{t}\right) _{t\geq 0}$ starting at $W_{0}=0$, a strong solution to (\ref%
{1-1dim}) -- (\ref{3-1-dim}) above for $t<\tau _{1}=\inf \left\{
s>0:\left\vert X_{s}-Y_{s}\right\vert >\varepsilon \right\} $ is given by%
\begin{equation*}
\left\{
\begin{array}{l}
X_{t}=x+W_{t} \\
Y_{t}=\left\vert a-\left\vert x+W_{t}-a\right\vert \right\vert \\
Z_{t}=\int_{0}^{t}\limfunc{sgn}\left( W_{s}\right) \limfunc{sgn}\left(
a-W_{s}\right) dW_{s}%
\end{array}%
\right. ,
\end{equation*}%
where
\begin{equation*}
\limfunc{sgn}\left( x\right) =\left\{
\begin{tabular}{ll}
$+1,\qquad $ & if $x\geq 0$ \\
$-1,$ & if $x<0$%
\end{tabular}%
\right. .
\end{equation*}
\end{proposition}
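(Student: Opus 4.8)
The plan is to verify directly that the proposed triple $(X_t, Y_t, Z_t)$ satisfies the three defining equations, with the main work being a careful application of Tanaka's formula applied twice to handle the two-sided reflection in $(0,a)$. First I would observe that the formula $Y_t = \left\vert a - \left\vert x + W_t - a \right\vert \right\vert$ is exactly the one-dimensional Skorokhod map for two-sided reflection: the inner absolute value reflects the free motion $x + W_t$ at the level $a$ (keeping it $\le a$), and the outer absolute value then reflects at $0$. Since $x \in [0,a]$ and we only run the process until the first time the gap exceeds $\varepsilon < a$, I would check that $Y_t$ indeed stays in $[0,a]$ and that the nested reflection produces the correct pair of boundary local times, so that (\ref{2-1-dim}) holds with the local time term $L_t^Y$ having the right sign conventions at the two boundary points.

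The heart of the argument, following Remark \ref{remark on Tanaka formula}, is to compute the martingale part of $Y_t$ via the Tanaka formula. Applying Tanaka to the inner expression $\left\vert x + W_t - a \right\vert$ gives a martingale part with integrand $\limfunc{sgn}(x + W_s - a) = \limfunc{sgn}(W_s + x - a)$; applying it again to the outer absolute value introduces a second sign factor. I would show that the composition of these two reflections yields, for the martingale part of $Y_t$, exactly the integrand $\limfunc{sgn}(W_s)\,\limfunc{sgn}(a - W_s)$ appearing in the claimed formula for $Z_t$ (after accounting for the shift by $x$ in the argument, so that the relevant sign switches occur at the times when $X_s = x + W_s$ hits $0$ or $a$). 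The two local-time contributions from the two applications of Tanaka's formula are precisely the boundary local time $L_t^Y$ at the two faces of $D_2 = (0,a)$.

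It then remains to confirm that $Z_t = \int_0^t \limfunc{sgn}(W_s)\,\limfunc{sgn}(a - W_s)\, dW_s$ agrees with the defining equation (\ref{3-1-dim}), namely $Z_t = \int_0^t G(Y_s - X_s)\,dW_s$. For this I would verify the pointwise identity $G(Y_s - X_s) = \limfunc{sgn}(W_s)\,\limfunc{sgn}(a - W_s)$ for almost every $s < \tau_1$. When $X_s = Y_s$ (i.e.\ $Y_s - X_s = 0$) the definition of $G$ gives $+1$, and I would check that this is consistent with $\limfunc{sgn}(W_s)\limfunc{sgn}(a-W_s)$ at such times, matching the sign convention $\limfunc{sgn}(0) = +1$; when $X_s \ne Y_s$ the reflection across the line of symmetry forces $G(Y_s - X_s) = -1$, which must equal the product of the two sign functions. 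The case analysis splits according to which boundary of $(0,a)$ the free motion $x + W_s$ has most recently crossed, determining whether the nested reflection currently flips orientation once or twice.

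The main obstacle I anticipate is the bookkeeping of signs at the crossing times and on the (Lebesgue-null but conceptually delicate) set where $X_s$ equals $0$ or $a$: one must confirm that the product $\limfunc{sgn}(W_s)\limfunc{sgn}(a - W_s)$ correctly records whether an even or odd number of reflections has been applied, and that the value chosen for $G$ at the coupling set ($+1$ at the origin, per (\ref{new def of G})) is consistent with the Tanaka-derived integrand on the coupling set rather than producing a genuine discrepancy. Because the integrand enters only as an $L^2$ integrand against $dW_s$, discrepancies on null sets are harmless, so I would argue that the equality $G(Y_s - X_s) = \limfunc{sgn}(W_s)\limfunc{sgn}(a - W_s)$ holds for a.e.\ $s$, which suffices to identify the two stochastic integrals and complete the verification that the stated triple is a strong solution up to $\tau_1$.
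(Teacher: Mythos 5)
Your proposal is correct and follows essentially the same route as the paper: the paper applies the It\^{o}--Tanaka formula once to the piecewise linear function $f(z)=\left\vert a-\left\vert z-a\right\vert \right\vert$ evaluated along $x+W_t$ (using that $x+W_t\in(-a,2a)$ up to $\tau_1$), then verifies by the same three-case analysis that the resulting integrand $\limfunc{sgn}(x+W_s)\limfunc{sgn}(a-x-W_s)$ equals $G(Y_s-X_s)$ and that the two local-time terms combine into $\int_0^t\nu_{D_2}(Y_s)\,dL_s^Y$. Your variant of applying Tanaka twice to the nested absolute values is only a cosmetic difference, and you correctly flag both the sign convention $\limfunc{sgn}(0)=+1$ on the (non-null) coupling set and the shift by $x$ in the argument of the sign functions, which the paper's displayed formula for $Z_t$ omits.
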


\begin{proof}
Since $\varepsilon <a$, it follows that for $t\leq \tau _{1}$ we have $%
X_{t}=x+W_{t}\in \left( -a,2a\right) $, and therefore
\begin{equation}
Y_{t}=\left\vert a-\left\vert x+W_{t}-a\right\vert \right\vert =\left\{
\begin{tabular}{ll}
$-\left( x+W_{t}\right) ,$ & $x+W_{t}\in \left( -a,0\right) $ \\
$x+W_{t},$ & $x+W_{t}\in \lbrack 0,a]$ \\
$2a-x-W_{t},$ & $x+W_{t}\in (a,2a)$%
\end{tabular}%
\right. .  \label{aux2}
\end{equation}

Applying the Tanaka-It\^{o} formula to the function $f\left( z\right)
=\left\vert a-\left\vert z-a\right\vert \right\vert $ and to the Brownian
motion $X_{t}=x+W_{t}$, for $t\leq \tau _{1}$ we obtain%
\begin{eqnarray*}
Y_{t} &=&x+\int_{0}^{t}\limfunc{sgn}\left( x+W_{s}\right) \limfunc{sgn}%
\left( a-x-W_{s}\right) d\left( x+W_{s}\right) +L_{t}^{0}-L_{t}^{a} \\
&=&x+\int_{0}^{t}\limfunc{sgn}\left( x+W_{s}\right) \limfunc{sgn}\left(
a-x-W_{s}\right) dW_{s}+\int_{0}^{t}\nu _{D_{2}}\left( Y_{s}\right) d\left(
L_{s}^{0}+L_{s}^{a}\right) ,
\end{eqnarray*}%
where $L_{t}^{0}=\sup_{s\leq t}\left( x+W_{s}\right) ^{-}$ and $%
L_{t}^{a}=\sup_{s\leq t}\left( x+W_{s}-a\right) ^{+}$ are the local times of
$x+W_{t}$ at $0$, respectively at $a$, and $\nu _{D_{2}}\left( 0\right) =+1$%
, $\nu _{D_{2}}\left( a\right) =-1$.

From (\ref{aux2}) and the definition of $G$ we have%
\begin{eqnarray*}
\limfunc{sgn}\left( x+W_{s}\right) \limfunc{sgn}\left( a-x-W_{s}\right)
&=&\left\{
\begin{tabular}{ll}
$-1,$ & $x+W_{s}\in \left( -a,0\right) $ \\
$+1,$ & $x+W_{s}\in \lbrack 0,a]$ \\
$-1,$ & $x+W_{s}\in (a,2a)$%
\end{tabular}%
\right. \\
&=&\left\{
\begin{tabular}{ll}
$+1,$ & $X_{s}=Y_{s}$ \\
$-1,$ & $X_{s}\neq Y_{s}$%
\end{tabular}%
\right. \\
&=&G\left( Y_{s}-X_{s}\right) ,
\end{eqnarray*}%
and therefore the previous formula can be written equivalently%
\begin{equation*}
Y_{t}=x+Z_{t}+\int_{0}^{t}\nu _{D_{2}}\left( Y_{s}\right) dL_{s}^{Y},
\end{equation*}%
where
\begin{equation*}
Z_{t}=\int_{0}^{t}G\left( Y_{s}-X_{s}\right) dW_{s}
\end{equation*}%
and $L_{t}^{Y}=L_{t}^{0}+L_{t}^{a}$ is a continuous nondecreasing process
which increases only when $x+W_{t}\in \left\{ 0,a\right\} $, that is only
when $Y_{t}\in \partial D_{2}$, which concludes the proof.
\end{proof}

\subsection{The case of polygonal domains\label{Case of polygonal domains}}

In this section we will consider the case when $D_{2}\subset D_{1}\subset
\mathbb{R}^{d}$ are convex polygonal domains (convex domains bounded by
hyperplanes in $\mathbb{R}^{d}$). From Remark \ref{reduction to the case
D_1=R} it follows that we can consider $D_{1}=\mathbb{R}^{d}$ and therefore
it suffices to prove the existence of a strong solution to

\begin{eqnarray}
X_{t} &=&X_{0}+W_{t}  \label{1-2dim} \\
Y_{t} &=&Y_{0}+Z_{t}+\int_{0}^{t}\nu _{D_{2}}\left( Y_{s}\right) dL_{s}^{Y}
\label{2-2dim} \\
Z_{t} &=&\int_{0}^{t}G\left( Y_{s}-X_{s}\right) dW_{s}  \label{3-2dim}
\end{eqnarray}%
or equivalently%
\begin{equation}
Z_{t}=\int_{0}^{t}G\left( \tilde{\Gamma}\left( Y_{0}+Z\right)
_{s}-X_{0}-W_{s}\right) dW_{s},  \label{4-2dim}
\end{equation}%
where $W_{t}$ is a $d$-dimensional Brownian motion starting at $W_{0}=0$ and
$X_{0}=Y_{0}\in \bar{D}_{2}$.

The construction relies on the skew product representation of Brownian
motion in spherical coordinates, that is%
\begin{equation}
X_{t}=R_{t}\Theta _{\sigma _{t}},  \label{skew product representation of BM}
\end{equation}%
where $R_{t}=\left\vert X_{t}\right\vert \in \limfunc{BES}\left( d\right) $
is a Bessel process of order $d$ and $\Theta _{t}\in \limfunc{BM}\left(
S^{d-1}\right) $ is an independent Brownian motion on the unit sphere $%
S^{d-1}$ in $\mathbb{R}^{d}$, run at speed%
\begin{equation}
\sigma _{t}=\int_{0}^{t}\frac{1}{R_{s}^{2}}ds,
\label{time change for skew product repr of BM}
\end{equation}%
which depends only on $R_{t}$.

\begin{remark}
One way to construct the Brownian motion $\Theta _{t}=\Theta _{t}^{d-1}$ on
the unit sphere $S^{d-1}\subset \mathbb{R}^{d}$ is to proceed inductively on
$d\geq 2$, using the skew product representation of Brownian motion on the
sphere $\Theta _{t}^{d-1}\in S^{d-1}$ (see \cite{Ito-McKean})%
\begin{equation*}
\Theta _{t}^{d-1}=\left( \cos \theta _{t}^{1},\sin \theta _{t}^{1}\Theta
_{\alpha _{t}}^{d-2}\right)
\end{equation*}
where $\theta ^{1}\in \limfunc{LEG}\left( d-1\right) $ is a Legendre process
of order $d-1$ on $\left[ 0,\pi \right] $, and $\Theta _{t}^{d-2}\in S^{d-2}$
is an independent Brownian motion on $S^{d-2}$, run at speed
\begin{equation*}
\alpha _{t}=\int_{0}^{t}\frac{1}{\sin ^{2}\theta _{s}^{1}}ds.
\end{equation*}

Therefore, considering independent processes $\theta _{t}^{1},\ldots \theta
_{t}^{d-1}$, where $\theta ^{i}\in \limfunc{LEG}\left( d-i\right) $ on $%
\left[ 0,\pi \right] $ ($i=\overline{1,d-2}$) and $\theta _{t}^{d-1}$ a $1$%
-dimensional Brownian ($\Theta _{t}^{1}=\left( \cos \theta _{t}^{1},\sin
\theta _{t}^{1}\right) \in S^{1}$ is a Brownian motion on $S^{1}$), we have%
\begin{equation*}
\Theta _{t}^{d-1}=\left( \cos \theta _{t}^{1},\sin \theta _{t}^{1}\cos
\theta _{t}^{2},\sin \theta _{t}^{1}\sin \theta _{t}^{2}\cos \theta
_{t}^{3},\ldots ,\sin \theta _{t}^{1}\cdots \sin \theta _{t}^{d-1}\sin
\theta _{t}^{d-1}\right) ,
\end{equation*}%
or equivalent, in spherical coordinates, $\Theta _{t}^{d-1}\in S^{d-1}$ is
given by
\begin{equation}
\Theta _{t}^{d-1}=\left( \theta _{t}^{1},\ldots ,\theta _{t}^{d-2},\theta
_{t}^{d-1}\right) .
\label{skew product BM on the sphere in spherical coordinates}
\end{equation}
\end{remark}

To construct the solution we first consider first the case when $D_{2}$ is a
half-space $\mathcal{H}_{d}^{+}=\left\{ \left( z^{1},\ldots ,z^{d}\right)
\in \mathbb{R}^{d}:z^{d}>0\right\} $.

Given an angle $\varphi \in \mathbb{R}$, we introduce the rotation matrix $%
R\left( \varphi \right) \in \mathcal{M}_{d\times d}$ which leaves invariant
the first $d-2$ coordinates and rotates clockwise by the angle $\alpha $ the
remaining $2$ coordinates, that is%
\begin{equation}
R\left( \alpha \right) =\left(
\begin{array}{llllcc}
1 &  & 0 &  & 0 & 0 \\
& \ddots &  &  & \cdots & \cdots \\
0 &  & 1 &  & 0 & 0 \\
0 & \cdots & 0 &  & \cos \varphi & -\sin \varphi \\
0 & \cdots & 0 &  & \sin \varphi & \cos \varphi%
\end{array}%
\right)  \label{rotation matrix}
\end{equation}

We have the following:

\begin{lemma}
\label{lemma for a half-space}Let $D_{2}=\mathcal{H}_{d}^{+}=\left\{ \left(
z^{1},\ldots ,z^{d}\right) \in \mathbb{R}^{d}:z^{d}>0\right\} $ and assume
that
\begin{equation}
Y_{0}=R\left( \varphi _{0}\right) X_{0}  \label{condition for a half-plane}
\end{equation}%
for some $\varphi _{0}\in \mathbb{R}$.

Consider the reflecting Brownian motion $\tilde{\theta}_{t}^{d-1}$ on $\left[
0,\pi \right] $ with driving Brownian motion $\theta _{t}^{d-1}$, where $%
\theta _{t}^{d-1}$ is the $\left( d-1\right) $ spherical coordinate of $%
G\left( Y_{0}-X_{0}\right) X_{t}$, given by (\ref{skew product
representation of BM}) -- (\ref{skew product BM on the sphere in spherical
coordinates}) above, that is:
\begin{equation*}
\tilde{\theta}_{t}^{d-1}=\theta _{t}^{d-1}+L_{t}^{0}\left( \tilde{\theta}%
^{d-1}\right) -L_{t}^{\pi }\left( \tilde{\theta}^{d-1}\right) ,\qquad t\geq
0,
\end{equation*}%
and $L_{t}^{0}\left( \tilde{\theta}^{d-1}\right) $, $L_{t}^{\pi }\left(
\tilde{\theta}^{d-1}\right) $ represent the local times of $\tilde{\theta}%
^{d-1}$ at $0$, respectively at $\pi $.

A strong solution to (\ref{1-2dim}) - (\ref{3-2dim}) above is explicitly
given by%
\begin{equation}
Y_{t}=\left\{
\begin{tabular}{ll}
$R\left( \varphi _{t}\right) G\left( Y_{0}-X_{0}\right) X_{t},$ & $t<\xi $
\\
$\left\vert X_{t}\right\vert _{d},$ & $t\geq \xi $%
\end{tabular}%
\right.  \label{eq of the solution in a half-plane}
\end{equation}%
where $\xi =\inf \left\{ t>0:X_{t}=Y_{t}\right\} $ is the coupling time, the
rotation angle $\varphi _{t}$ is given by
\begin{equation*}
\varphi _{t}=L_{t}^{0}\left( \tilde{\theta}^{d-1}\right) -L_{t}^{\pi }\left(
\tilde{\theta}^{d-1}\right) ,\qquad t\geq 0,
\end{equation*}%
and for $z=\left( z^{1},z^{2}\ldots ,z^{d}\right) \in \mathbb{R}^{d}$ we
denoted by $\left\vert z\right\vert _{d}=\left( z^{1},z^{2},\ldots
,\left\vert z^{d}\right\vert \right) $.
\end{lemma}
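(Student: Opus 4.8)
The plan is to verify directly that the explicitly given triple $(X_t,Y_t,Z_t)$, with $Z_t=\int_0^t G(Y_s-X_s)\,dW_s$, solves the system \eqref{1-2dim}--\eqref{3-2dim}; since every ingredient is assembled from $W$ together with boundary local times, the solution is automatically strong, so the entire content is a pathwise identity. The guiding observation is that, because $Y_0=R(\varphi_0)X_0$ and $R(\varphi_0)$ fixes the first $d-2$ coordinates, the vector $Y_0-X_0$ lies in the $2$-plane $P$ spanned by $e_{d-1},e_d$, and the fixed reflection $G_0:=G(Y_0-X_0)$ acts as the identity on $P^\perp$. Hence the whole construction reduces to a planar problem in $P$ carried along by the shared radial and lower angular coordinates of the skew product \eqref{skew product representation of BM}: both $X_t$ and $Y_t:=R(\varphi_t)G_0X_t$ have the same modulus $R_t=|X_t|$ and the same first $d-2$ spherical angles, differing only in the azimuthal angle in $P$.

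First I would record the reflection identity that is the heart of the matter: for each fixed value of $\varphi$, the orthogonal map $R(\varphi)G_0$ is \emph{exactly} the mirror reflection $G(Y_t-X_t)$ across the hyperplane through the origin perpendicular to $Y_t-X_t$. Indeed $G_0$ is the reflection of $P$ across a line at some angle $\gamma_0$, and composing with the rotation $R(\varphi)$ yields the reflection across the line at angle $\gamma_0+\varphi/2$; since $X_t$ and $Y_t=R(\varphi_t)G_0X_t$ have equal $P$-radius, this line is precisely the perpendicular bisector of the pair $(X_t,Y_t)$, which is what $G(Y_t-X_t)$ reflects across. In particular $Y_t-X_t\in P$ for all $t$, and the azimuthal angle of $Y_t$ equals $\theta^{d-1}_t+\varphi_t=\tilde\theta^{d-1}_t$, so that $Y_t$ lies in $\overline{\mathcal{H}_d^+}$ exactly because $\tilde\theta^{d-1}$ is confined to $[0,\pi]$, the boundary $\{z^d=0\}$ corresponding to the angles $0$ and $\pi$.

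Next I would apply It\^o's formula to $Y_t=R(\varphi_t)G_0X_t$ on $\{t<\xi\}$. Since $\varphi_t=L^0_t(\tilde\theta^{d-1})-L^\pi_t(\tilde\theta^{d-1})$ has bounded variation there is no cross term, and
\begin{equation*}
dY_t=R(\varphi_t)G_0\,dW_t+\bigl(\partial_\varphi R(\varphi_t)\bigr)G_0X_t\,d\varphi_t .
\end{equation*}
By the reflection identity the martingale part is $G(Y_t-X_t)\,dW_t=dZ_t$, which is \eqref{3-2dim}. For the finite-variation part, $\bigl(\partial_\varphi R(\varphi_t)\bigr)G_0X_t=JY_t$, where $J$ is the generator of rotation in $P$; evaluating on the support of $d\varphi_t$, i.e. when $\tilde\theta^{d-1}_t\in\{0,\pi\}$ and hence $Y_t\in\partial\mathcal{H}_d^+$, one checks that the two pushes combine into $e_d=\nu_{D_2}(Y_t)$ times the nonnegative increment $dL^Y_t=r_t\,\bigl(dL^0_t+dL^\pi_t\bigr)$, with $r_t$ the $P$-radius. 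This identifies the finite-variation part with $\nu_{D_2}(Y_s)\,dL^Y_s$ and shows $L^Y$ is continuous, nondecreasing, and increases only when $Y_t\in\partial D_2$, giving \eqref{2-2dim}.

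Finally, on $\{t\ge\xi\}$ the proposed solution is $Y_t=|X_t|_d$, and I would verify it by Tanaka's formula applied to the last coordinate: this exhibits $|X_t|_d$ as reflecting Brownian motion in $\mathcal{H}_d^+$, while the mirror relation \eqref{3-2dim} persists because $Y_t=X_t$ (so $G(0)=I$, where the modified value at the origin is now relevant) as long as $X^d_t>0$, and $Y_t-X_t=-2X^d_te_d$ once $X^d_t<0$, in which case $G(Y_t-X_t)=H(e_d)$ reflects the last coordinate. This is the \emph{sticky} behaviour in which the processes stay together until $X$ crosses $\partial D_2$. I expect the main obstacle to be the bookkeeping of the finite-variation computation in the preceding paragraph: checking that the local-time pushes at $\tilde\theta^{d-1}=0$ and $\tilde\theta^{d-1}=\pi$ both point along the \emph{inward} normal $+e_d$ with the correct signs, confirming the continuous matching of the two formulas for $Y_t$ at the coupling time $\xi$, and justifying the treatment of the azimuthal coordinate of $G_0X_t$ (a time-changed martingale via the skew product) as the driving semimartingale of the reflecting process $\tilde\theta^{d-1}$.
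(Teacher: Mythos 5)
Your proposal is correct and follows essentially the same route as the paper's proof: It\^o's formula applied to $R(\varphi_t)G(Y_0-X_0)X_t$, identification of $R(\varphi_t)G(Y_0-X_0)$ with the reflection $G(Y_t-X_t)$ via the equal-modulus observation, identification of the finite-variation part with the inward-normal push on $\{z^d=0\}$ driven by the local times of $\tilde{\theta}^{d-1}$ at $0$ and $\pi$, and Tanaka's formula on the last coordinate for $t\ge\xi$. (You are in fact slightly more explicit than the paper about the planar reduction and the radius factor $r_t$ relating $dL^{Y}$ to $dL^{0}+dL^{\pi}$, but the argument is the same.)
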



\begin{figure}[thb]
\begin{center}
\includegraphics[scale=0.7]{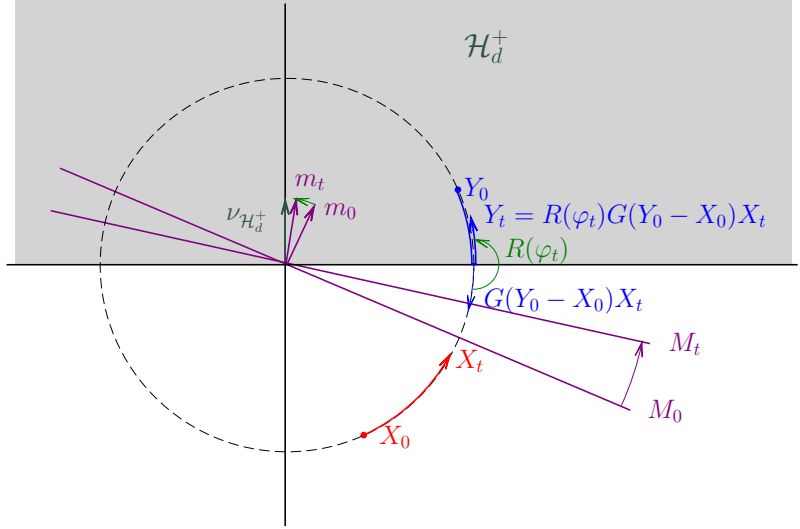}
\caption{The mirror coupling of a free Brownian motion $X_{t}$ and a
reflecting Brownian motion $Y_{t}$ in the half-space
$\mathcal{H}_{d}^{+}$.} \label{fig1}
\end{center}
\end{figure}

\begin{proof}
Recall that for $m\in \mathbb{R}^{d}-\left\{ 0\right\} $, $G\left( m\right)
v $ denotes the mirror image (symmetric) of $v\in \mathbb{R}^{d}$ in the
hyperplane through the origin, perpendicular to $m$.

By It\^{o} formula, we have%
\begin{equation}
Y_{t\wedge \xi }=Y_{0}+\int_{0}^{t\wedge \xi }R\left( \varphi _{s}\right)
G\left( Y_{0}-X_{0}\right) dX_{s}+\int_{0}^{t\wedge \xi }R\left( \varphi
_{s}+\frac{\pi }{2}\right) G\left( Y_{0}-X_{0}\right) dL_{s}  \label{aux1}
\end{equation}

Note that the composition $R\circ G$ (a symmetry followed by a rotation) is
a symmetry, and since $\left\vert Y_{t}\right\vert =\left\vert
X_{t}\right\vert $ for all $t\geq 0$, it follows that $X_{t}$ and $Y_{t}$
are symmetric with respect to a hyperplane passing through the origin for
all $t\leq \xi $; therefore, from the definition (\ref{new def of G}) of $G$
it follows that we have $Y_{t}=G\left( Y_{t}-X_{t}\right) X_{t}$ for all $%
t\leq \xi $.

Also note that when $L_{s}^{0}\left( \tilde{\theta}^{d-1}\right) $
increases, $Y_{s}\in \partial D_{2}$ and we have
\begin{equation*}
R\left( \varphi _{s}+\frac{\pi }{2}\right) G\left( Y_{0}-X_{0}\right)
X_{s}=R\left( \frac{\pi }{2}\right) Y_{s}=\nu _{D_{2}}\left( Y_{s}\right)
\end{equation*}%
and if $L_{s}^{\pi }\left( \tilde{\theta}^{d-1}\right) $ increases, $%
Y_{s}\in \partial D_{2}$ and we have
\begin{equation*}
R\left( \varphi _{s}+\frac{\pi }{2}\right) G\left( Y_{0}-X_{0}\right)
X_{s}=R\left( \frac{\pi }{2}\right) Y_{s}=-\nu _{D_{2}}\left( Y_{s}\right) .
\end{equation*}%
It follows that the relation (\ref{aux1}) above can be written in the form%
\begin{equation*}
Y_{t\wedge \xi }=Y_{0}+\int_{0}^{t\wedge \xi }G\left( Y_{s}-X_{s}\right)
dX_{s}+\int_{0}^{t\wedge \xi }\nu _{D_{2}}\left( Y_{s}\right) dL_{s}^{Y},
\end{equation*}%
where $L_{t}^{Y}=L_{t}^{0}\left( \tilde{\theta}^{d-1}\right) +L_{t}^{\pi
}\left( \tilde{\theta}^{d-1}\right) $ is a continuous non-decreasing process
which increases only when $Y_{t}\in \partial D_{2}$, and therefore $Y_{t}$
given by (\ref{eq of the solution in a half-plane}) is a strong solution to (%
\ref{1-2dim}) - (\ref{3-2dim}) for $t\leq \xi $.

For $t\geq \xi $, we have $Y_{t}=\left\vert X_{t}\right\vert _{d}=\left(
X_{t}^{1},X_{t}^{2},\ldots ,\left\vert X_{t}^{d}\right\vert \right) $, and
similar to the $1$-dimensional case, by Tanaka formula we obtain:%
\begin{eqnarray}
Y_{t\vee \xi } &=&Y_{\xi }+\int_{\xi }^{t\vee \xi }\left( 1,\ldots ,1,%
\limfunc{sgn}\left( X_{s}^{d}\right) \right) dX_{s}+\int_{\xi }^{t\vee \xi
}\left( 0,0,\ldots ,1\right) L_{t}^{0}\left( X^{d}\right)  \label{aux3} \\
&=&Y_{\xi }+\int_{\xi }^{t\vee \xi }G\left( Y_{s}-X_{s}\right)
dX_{s}+\int_{\xi }^{t\vee \xi }\nu _{D_{2}}\left( Y_{s}\right) L_{t}^{Y},
\notag
\end{eqnarray}%
since
\begin{eqnarray*}
G\left( Y_{s}-X_{s}\right) &=&\left\{
\begin{tabular}{ll}
$\left( 1,\ldots ,1,+1\right) ,$ & $X_{s}=Y_{s}$ \\
$\left( 1,\ldots ,1,-1\right) ,$ & $X_{s}\neq Y_{s}$%
\end{tabular}%
\right. \\
&=&\left\{
\begin{tabular}{ll}
$\left( 1,\ldots ,1,+1\right) ,$ & $X_{s}^{d}\geq 0$ \\
$\left( 1,\ldots ,1,-1\right) ,$ & $X_{s}^{d}<0$%
\end{tabular}%
\right. \\
&=&\left( 1,\ldots ,1,\limfunc{sgn}\left( X_{s}^{d}\right) \right) .
\end{eqnarray*}

$L_{t}^{Y}=L_{t}^{0}\left( X^{d}\right) $ in (\ref{aux3}) is a continuous
non-decreasing process which increases only when $Y_{t}\in \partial D_{2}$
(we denoted by $L_{t}^{0}\left( X^{d}\right) $ the local time at $0$ of the
last cartesian coordinate $X^{d}$ of $X$), which shows that $Y_{t}$ also
solves (\ref{1-2dim}) - (\ref{3-2dim}) for $t\geq \xi $, and therefore $%
Y_{t} $ is a strong solution to (\ref{1-2dim}) - (\ref{3-2dim}) for $t\geq 0$%
, concluding the proof.
\end{proof}

Consider now the case of a general polygonal domain $D_{2}\subset \mathbb{R}%
^{d}$. We will show that a strong solution to (\ref{1-2dim}) - (\ref{3-2dim}%
) can be constructed from the previous lemma, by choosing the
appropriate coordinate system.

Consider the times $\left( \sigma _{n}\right) _{n\geq 0}$ at which the
solution $Y_{t}$ hits different bounding hyperplanes of $\partial D_{2}$,
that is $\sigma _{0}=\inf \left\{ s\geq 0:Y_{s}\in \partial D_{2}\right\} $
and inductively%
\begin{equation}
\sigma _{n+1}=\inf \left\{ t\geq \sigma _{n}:%
\begin{array}{c}
Y_{t}\in \partial \mathcal{D}_{2}\text{ and }Y_{t}\text{,}Y_{\sigma _{n}}%
\text{ belong to different\footnote{Since $2$-dimensional Brownian
motion does not hit points a.s., the $d$-dimensional Brownian motion
$Y_{t}$ does not hit the edges of $D_{2}$ ($ \left( d-2\right)
$-dimensional hyperplanes in $\mathbb{R}^{d}$) a.s., thus
there is no ambiguity in the definition.} } \\
\text{bounding hyperplanes of }\partial D_{2}%
\end{array}%
\right\} .
\end{equation}

If $X_{0}=Y_{0}\in \partial D_{2}$ belong to a certain bounding hyperplane
of $D_{2}$, we can chose the coordinate system so that this hyperplane is $%
\mathcal{H}_{d}=\left\{ \left( z^{1},\ldots ,z^{d}\right) \in \mathbb{R}%
^{d}:z^{d}=0\right\} $ and $D_{2}\subset \mathcal{H}_{d}^{+}$, and we let $%
\mathcal{H}_{d}$ be any bounding hyperplane of $D_{2}$ otherwise.

Then, on the time interval $[\sigma _{0},\sigma _{1})$, the strong solution
to (\ref{1-2dim}) - (\ref{3-2dim}) is given explicitly by (\ref{eq of the
solution in a half-plane}) in Lemma \ref{lemma for a half-space}.

If $\sigma _{1}<\infty $, we distinguish two cases: $X_{\sigma
_{1}}=Y_{\sigma _{1}}$ and $X_{\sigma _{1}}\neq Y_{\sigma _{1}}$. Let $%
\mathcal{H}$ denote the bounding hyperplane of $D$ which contains $Y_{\sigma
_{1}}$, and let $\nu _{\mathcal{H}}$ denote the unit normal to $\mathcal{H}$
pointing inside $D_{2}$.

If $X_{\sigma _{1}}=Y_{\sigma _{1}}\in \mathcal{H}$, choosing again the
coordinate system conveniently, we may assume that $\mathcal{H}$ is the
hyperplane is $\mathcal{H}_{d}=\left\{ \left( z^{1},\ldots ,z^{d}\right) \in
\mathbb{R}^{d}:z^{d}=0\right\} $, and on the time interval $[\sigma
_{1},\sigma _{2})$ the coupling $\left( X_{\sigma _{1}+t},Y_{\sigma
_{1}+t}\right) _{t\in \lbrack 0,\sigma _{2}-\sigma _{1})}$ is given again by
Lemma \ref{lemma for a half-space}.

If $X_{\sigma _{1}}\neq Y_{\sigma _{1}}\in \mathcal{H}$, in order to apply
the lemma, we have to show that we can choose the coordinate system so that
the condition (\ref{condition for a half-plane}) holds. If $Y_{\sigma
_{1}}-X_{\sigma _{1}}$ is a vector perpendicular to $\mathcal{H}$, by
choosing the coordinate system so that $\mathcal{H}=\mathcal{H}_{d}=\left\{
\left( z^{1},\ldots ,z^{d}\right) \in \mathbb{R}^{d}:z^{d}=0\right\} $, the
problem reduces to the $1$-dimensional case (the first $d-1$ coordinates of $%
X$ and $Y$ are the same), and it can be handled as in Proposition \ref{Prop
1-dim case} by the Tanaka formula. The proof being similar, we omit it.

If $X_{\sigma _{1}}\neq Y_{\sigma _{1}}\in \mathcal{H}$ and $Y_{\sigma
_{1}}-X_{\sigma _{1}}$ is not orthogonal to $\mathcal{H}$, consider $\tilde{X%
}_{\sigma _{1}}=\limfunc{pr}_{\mathcal{H}}X_{\sigma _{1}}$ the projection of
$X_{\sigma _{1}}$ onto $\mathcal{H}$, and therefore $\tilde{X}_{\sigma
_{1}}\neq Y_{\sigma _{1}}$. The plane of symmetry of $X_{\sigma _{1}}$ and $%
Y_{\sigma _{1}}$ intersects the line determined by $\tilde{X}_{\sigma _{1}}$
and $Y_{\sigma _{1}}$ at a point, and we consider this point as the origin
of the coordinate system (note that the intersection cannot be empty, for
otherwise the vectors $Y_{\sigma _{1}}-X_{\sigma _{1}}$ and $Y_{\sigma _{1}}-%
\tilde{X}_{\sigma _{1}}$ were parallel, which is impossible since then $%
Y_{\sigma _{1}}-X_{\sigma _{1}},Y_{\sigma _{1}}-\tilde{X}_{\sigma
_{1}}$ and $Y_{\sigma _{1}}-\tilde{X}_{\sigma _{1}},X_{\sigma
_{1}}-\tilde{X}_{\sigma _{1}}$ were perpendicular pairs of vectors,
contradicting $\tilde{X}_{\sigma _{1}}\neq Y_{\sigma _{1}}$ - see
Figure \ref{fig2}).


\begin{figure}[thb]
\begin{center}
\includegraphics[scale=0.9]{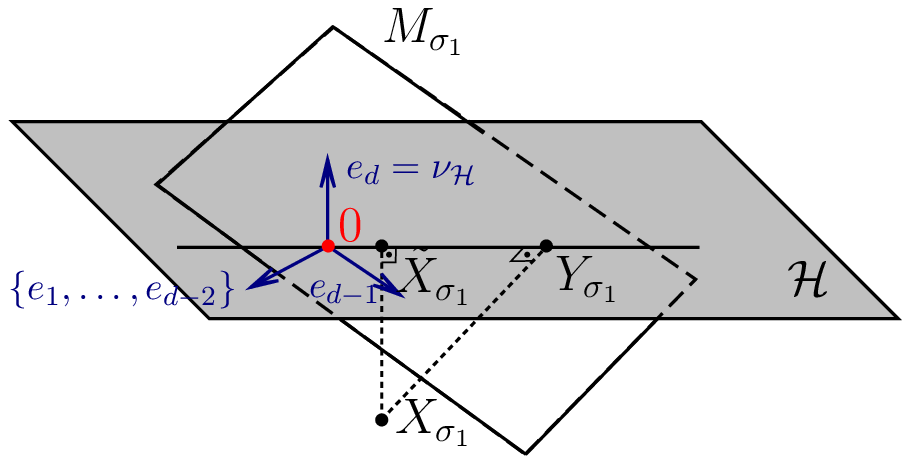}
\caption{Construction of an appropriate coordinate system.}
\label{fig2}
\end{center}
\end{figure}

Choose a orthonormal basis $\left\{ e_{1},\ldots ,e_{d}\right\} $ in $%
\mathbb{R}^{d}$ such that $e_{d}=\nu _{\mathcal{H}}$ is the normal
vector to $\mathcal{H}$ pointing inside $D_{2}$,
$e_{d-1}=\frac{1}{\left\vert Y_{\sigma _{1}}-X_{\sigma
_{1}}\right\vert }\left( Y_{\sigma _{1}}-X_{\sigma _{1}}\right) $ is
a unit vector lying in the $2$-dimensional plane determined by the
origin and the vectors $e_{d}$ and $Y_{\sigma _{1}}-X_{\sigma
_{1}}$, and $\left\{ e_{1},\ldots ,e_{d-2}\right\} $ is a
completion of $\left\{ e_{d-1},e_{d}\right\} $ to a orthonormal basis in $%
\mathbb{R}^{d}$ (see Figure \ref{fig2}).

Note that by the construction, the vectors $e_{1},\ldots ,e_{d-2}$ are
orthogonal to the $2$-dimensional hyperplane containing the origin and the
points $X_{\sigma _{1}}$ and $Y_{\sigma _{1}}$, and therefore $X_{\sigma
_{1}}$ and $Y_{\sigma _{1}}$ have the same (zero) first $d-2$ coordinates;
also, since $X_{\sigma _{1}}$ and $Y_{\sigma _{1}}$ are at the same distance
from the origin, it follows that $Y_{\sigma _{1}}$ can be obtained from $%
X_{\sigma _{1}}$ by a rotation which leaves invariant the first $d-2$
coordinates, which shows that the condition (\ref{condition for a half-plane}%
) of Lemma \ref{lemma for a half-space} is satisfied.

Since by construction the bounding hyperplane $\mathcal{H}$ of $D_{2}$ at $%
Y_{\sigma _{1}}$ is given by $\mathcal{H}_{d}=\left\{ \left( z^{1},\ldots
,z^{d}\right) \in \mathbb{R}^{d}:z^{d}=0\right\} $ and $D_{2}\subset
\mathcal{H}_{d}^{+}=\left\{ \left( z^{1},\ldots ,z^{d}\right) \in \mathbb{R}%
^{d}:z^{d}>0\right\} $, we can apply Lemma \ref{lemma for a half-space} and
deduce that on the time interval $[\sigma _{1},\sigma _{2})$ a solution to (%
\ref{1-2dim}) - (\ref{3-2dim}) is given by $\left( X_{\sigma
_{1}+t},Y_{\sigma _{1}+t}\right) _{t\in \lbrack 0,\sigma _{2}-\sigma _{1})}$.

Repeating the above argument we can construct inductively (in appropriate
coordinate systems) the solution to (\ref{1-2dim}) - (\ref{3-2dim}) on any
time interval $[\sigma _{n},\sigma _{n+1})$, $n\geq 1$, therefore obtaining
a strong solution to (\ref{1-2dim}) - (\ref{3-2dim}) defined for $t\geq 0$.

We summarize the above discussion in the following:

\begin{theorem}
\label{coupling in polygonal domains}If $D_{2}\subset \mathbb{R}^{d}$ is a
convex polygonal domain, for any $X_{0}=Y_{0}\in \bar{D}_{2}$, there exists
a strong solution to (\ref{1-2dim}) - (\ref{3-2dim}) above.

Moreover, between successive hits of different bounding hyperplanes
of $D_{2}$ (i.e. on each time interval $[\sigma _{n},\sigma _{n+1})$
in the notation above) and for an appropriately chosen coordinate
system, the solution is given by Lemma \ref{lemma for a half-space}.
\end{theorem}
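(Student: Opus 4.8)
The plan is to build the solution by induction over the random intervals $[\sigma_n,\sigma_{n+1})$ on which a single face of $D_2$ is active, reducing each step to the half-space situation of Lemma \ref{lemma for a half-space}. By Remark \ref{reduction to the case D_1=R} I may assume $D_1=\mathbb{R}^d$, so $X_t=X_0+W_t$ is a free Brownian motion. On $[0,\sigma_0)$ the two processes start at the common point $X_0=Y_0$; since $G(0)=I$ and $Y$ receives no push while it is interior, equation (\ref{3-2dim}) gives $Z_t=W_t$ and hence $Y_t=X_t$, which trivially solves (\ref{1-2dim})--(\ref{3-2dim}) up to the first boundary hit $\sigma_0$ (finite a.s., as a free Brownian motion reaches $\partial D_2$ in finite time).

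For the inductive step I would assume a strong solution on $[0,\sigma_n)$ and extend it to $[\sigma_n,\sigma_{n+1})$ by the strong Markov property, applying Lemma \ref{lemma for a half-space} to the shifted driving motion $\widetilde W_t=W_{\sigma_n+t}-W_{\sigma_n}$ with initial data $X_{\sigma_n},Y_{\sigma_n}$. The crux is to choose a coordinate system in which the active bounding hyperplane $\mathcal{H}$ becomes $\mathcal{H}_d=\{z^d=0\}$, $D_2\subset\mathcal{H}_d^+$, and the hypothesis (\ref{condition for a half-plane}), $Y_{\sigma_n}=R(\varphi_0)X_{\sigma_n}$, is satisfied. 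If $X_{\sigma_n}=Y_{\sigma_n}$ this holds with $\varphi_0=0$; if $Y_{\sigma_n}-X_{\sigma_n}$ is normal to $\mathcal{H}$ the first $d-1$ coordinates of $X$ and $Y$ coincide and the interval is dispatched by the one-dimensional Tanaka computation of Proposition \ref{Prop 1-dim case}. In the remaining generic case I would take the origin to be the intersection of the perpendicular bisector of $X_{\sigma_n},Y_{\sigma_n}$ with the line through $Y_{\sigma_n}$ and $\limfunc{pr}_{\mathcal{H}}X_{\sigma_n}$, and set $e_d=\nu_{\mathcal{H}}$, $e_{d-1}\parallel Y_{\sigma_n}-X_{\sigma_n}$, completing arbitrarily to an orthonormal basis; one then verifies that $X_{\sigma_n}$ and $Y_{\sigma_n}$ have the same (vanishing) first $d-2$ coordinates and are equidistant from the origin, so they differ by a rotation fixing the first $d-2$ axes, which is exactly (\ref{condition for a half-plane}).

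With the coordinates fixed, Lemma \ref{lemma for a half-space} furnishes an explicit strong solution on $[\sigma_n,\sigma_{n+1})$ via (\ref{eq of the solution in a half-plane}); transforming back and concatenating over $n$ produces a continuous, adapted process $Y$ (continuity holds because the half-space formula takes the value $Y_{\sigma_n}$ at the left endpoint of each interval). I would then check that the glued triple $(X,Y,Z)$ solves (\ref{1-2dim})--(\ref{3-2dim}), equivalently (\ref{4-2dim}), on all of $[0,\infty)$: away from the countable set $\{\sigma_n\}$ this is immediate from the per-interval identities, and since that set is Lebesgue-null it contributes nothing to the stochastic and local-time integrals, so the equation holds globally. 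The ``moreover'' clause is then automatic, being precisely how each interval was constructed.

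The step I expect to require the most care is showing that the construction yields a genuinely global solution, i.e. that the switch times do not accumulate, $\sigma_n\uparrow\infty$ almost surely. Here is where I would use convexity and the finiteness of the faces: if $\sigma_n\uparrow t^*<\infty$, then by continuity $Y_{t^*}$ is a common limit of points lying on at least two distinct bounding hyperplanes, and for a convex polytope two distinct faces meet only along a lower-dimensional edge, so $Y_{t^*}$ would lie on such a $(d-2)$-dimensional edge; this is excluded almost surely by the same point-avoidance property of planar Brownian motion recorded in the footnote to the definition of $\sigma_{n+1}$, giving $\sigma_n\uparrow\infty$. A secondary technical point, needed to guarantee a \emph{strong} (rather than merely weak) solution, is that the coordinate data above is an $\mathcal{F}_{\sigma_n}$-measurable function of $(X_{\sigma_n},Y_{\sigma_n})$, which requires only a measurable choice of the completing vectors $e_1,\ldots,e_{d-2}$.
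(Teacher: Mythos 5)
Your proposal follows essentially the same route as the paper: induction over the intervals $[\sigma_n,\sigma_{n+1})$, the same three-case analysis at each switch time (processes coupled, $Y_{\sigma_n}-X_{\sigma_n}$ normal to $\mathcal{H}$ reducing to the one-dimensional Tanaka computation, and the generic case), and the identical coordinate construction via the projection of $X_{\sigma_n}$ onto $\mathcal{H}$ and the perpendicular bisector, so that each interval is dispatched by Lemma \ref{lemma for a half-space}. Your extra checks --- that the $\sigma_n$ do not accumulate (via edge-avoidance of the $(d-2)$-dimensional intersections of distinct faces) and that the coordinate frame can be chosen $\mathcal{F}_{\sigma_n}$-measurably --- address points the paper leaves implicit but do not alter the argument.
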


We will refer to the solution $\left( X_{t},Y_{t}\right) _{t\geq 0}$
constructed in the previous theorem as a \emph{mirror coupling }of
reflecting Brownian motions in $\left( \mathbb{R}^{d},D_{2}\right) $ with
starting point $X_{0}=Y_{0}\in \overline{D_{2}}$.

If $X_{t}\neq Y_{t},$ the hyperplane $M_{t}$ of symmetry between $X_{t}$ and
$Y_{t}$, passing through $\frac{X_{t}+Y_{t}}{2}$ with normal $m_{t}=\frac{1}{%
\left\vert Y_{t}-X_{t}\right\vert }\left( Y_{t}-X_{t}\right) $, will be
referred to as the \emph{mirror of the coupling}. For definiteness, when $%
X_{t}=Y_{t}$ we let $M_{t}$ denote any hyperplane passing through $%
X_{t}=Y_{t}$, for example we choose $M_{t}$ such that it is a left
continuous function with respect to $t$.

Some of the properties of the mirror coupling are contained in the following:

\begin{proposition}
\label{properties of mirror coupling in polygonal domains}If $D_{2}\subset
\mathbb{R}^{d}$ is a convex polygonal domain, for any $X_{0}=Y_{0}\in \bar{D}%
_{2}$, the mirror coupling given by the previous theorem has the following
properties:

\begin{enumerate}
\item[i)] If the reflection takes place in the boundary hyperplane $\mathcal{%
H}$ of $D_{2}$ with inward unitary normal $\nu _{\mathcal{H}}$, then the
angle $\angle (m_{t};\nu _{\mathcal{H}})$ decreases monotonically to zero.

\item[ii)] When processes are not coupled, the mirror $M_{t}$ lies outside $%
D_{2}$.

\item[iii)] Coupling can take place precisely when $X_{t}\in \partial D_{2}$%
. Moreover, if $X_{t}\in D_{2}$, then $X_{t}=Y_{t}$.

\item[iv)] If $D_{\alpha }\subset D_{\beta }$ are two polygonal domains and $%
(Y_{t}^{\alpha };X_{t})$, $(Y_{t}^{\beta };X_{t})$ are the corresponding
mirror coupling starting from $x\in \overline{D_{\alpha }}$, for any $t>0$
we have%
\begin{equation}
\sup_{s\leq t}\left\vert Y_{s}^{\alpha }-Y_{s}^{\beta }\right\vert \leq
\limfunc{Dist}\left( D^{\alpha },D^{\beta }\right) :=\max_{\substack{ %
x_{\alpha }\in \partial D_{\alpha },x_{\beta }\in \partial D_{\beta }  \\ %
\left( x_{\beta }-x_{\alpha }\right) \cdot \nu _{D_{\alpha }}\left(
x_{\alpha }\right) \leq 0}}\left\vert x_{\alpha }-x_{\beta }\right\vert .
\end{equation}
\end{enumerate}
\end{proposition}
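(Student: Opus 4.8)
The plan is to read (i)--(iii) off the explicit description of the coupling given by Lemma~\ref{lemma for a half-space} and the phase decomposition of Theorem~\ref{coupling in polygonal domains}, and then to obtain the quantitative estimate (iv) by a semimartingale argument driven by the convexity of the domains; the last item will be the main obstacle. For (i) I would work in the coordinate frame of Lemma~\ref{lemma for a half-space}, where the reflection takes place in $\mathcal{H}=\{z^d=0\}$ with $\nu_{\mathcal{H}}=e_d$ and $Y_t=R(\varphi_t)G(Y_0-X_0)X_t$. Since $R\circ G$ is again a reflection, the mirror normal $m_t=(Y_t-X_t)/|Y_t-X_t|$ is the image of the fixed normal of $G(Y_0-X_0)$ under the half-angle rotation $R(\varphi_t/2)$, so $\angle(m_t;\nu_{\mathcal{H}})$ is an affine function of $\varphi_t=L_t^0(\tilde\theta^{d-1})-L_t^\pi(\tilde\theta^{d-1})$. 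I would then check, using the identifications in the proof of that lemma, that each increment of the boundary local time of $\tilde\theta^{d-1}$ at $0$ or at $\pi$ rotates $m_t$ \emph{toward} $\nu_{\mathcal{H}}$; hence the (unsigned) angle is nonincreasing and tends to $0$ as $t\to\xi$, where $m_\xi=\nu_{\mathcal{H}}$ and $X_\xi=Y_\xi$.

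Items (ii) and (iii) I would prove together from the half-space representation and convexity. In the coupled regime $G(0)=I$, so $dZ=dW$ and $Y$ is the plain Skorokhod reflection of $X$ onto $D_2$, equal to $X$ as long as $X\in D_2$; thus coupling can break only when the common point reaches $\partial D_2$. When the processes are uncoupled, $M_t$ is the perpendicular bisector of $[X_t,Y_t]$ with $Y_t\in\overline{D_2}$, and its normal $m_t$ points from $X_t$ toward $Y_t$; by convexity of $D_2$ all of $D_2$ lies on the $Y_t$-side of $M_t$, giving $M_t\cap D_2=\emptyset$, which is (ii). Consequently $X_t$, being the reflection of $Y_t\in\overline{D_2}$ across a mirror disjoint from $D_2$, lies outside $D_2$; contrapositively $X_t\in D_2\Rightarrow X_t=Y_t$, and coupling occurs precisely on $\partial D_2$, which is (iii).

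For (iv) I would invoke Remark~\ref{reduction to the case D_1=R} to take the common process $X_t=x+W_t$ free, set $V_t=Y_t^\alpha-Y_t^\beta$, and note by (iii) that $V_t=0$ whenever $X_t\in D_\alpha$, so it suffices to control $|V|$ on the excursions of $X$ outside $D_\alpha$. Writing (\ref{2-2dim})--(\ref{3-2dim}) for both couplings and applying It\^o to $|V_t|^2$, the finite-variation part splits into the quadratic-variation term $|G(Y^\alpha-X)-G(Y^\beta-X)|_{HS}^2\,dt$ and the two reflection terms $2V\cdot\nu_{D_\alpha}(Y^\alpha)\,dL^{Y^\alpha}$ and $-2V\cdot\nu_{D_\beta}(Y^\beta)\,dL^{Y^\beta}$. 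Since $\overline{D_\alpha}\subset\overline{D_\beta}$ and $D_\beta$ is convex, on $\{Y^\beta\in\partial D_\beta\}$ one has $(Y^\alpha-Y^\beta)\cdot\nu_{D_\beta}(Y^\beta)\ge 0$, so the $D_\beta$-term has a favorable sign; the $D_\alpha$-term, active only on $\{Y^\alpha\in\partial D_\alpha\}$, is where the one-sided quantity $\limfunc{Dist}(D^\alpha,D^\beta)$ enters, its extremal configuration being a boundary pair $(x_\alpha,x_\beta)$ realizing the maximum in the definition.

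The main obstacle is the quadratic-variation term, which does not vanish: it equals $8\big(1-(m^\alpha\cdot m^\beta)^2\big)\,dt$ when both processes are uncoupled and, crucially, equals the constant $4\,dt$ on the mixed set where exactly one process is coupled, so a na\"ive drift-sign estimate on $|V|^2$ fails. To handle it I would not argue on $|V|^2$ globally but pass to the phase decomposition of Theorem~\ref{coupling in polygonal domains}: on each interval $[\sigma_n,\sigma_{n+1})$ both couplings are given by Lemma~\ref{lemma for a half-space} with the \emph{same} $X$, and property (i) aligns each mirror with its current boundary normal. On the mixed set this ties the martingale increment of the uncoupled process to a one-dimensional reflection off the relevant hyperplane, so that the $4\,dt$ growth is exactly absorbed by the corresponding reflection local time (a balance visible already in the one-dimensional computation behind Proposition~\ref{Prop 1-dim case}). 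A Gronwall estimate on $|V_t|^2$ using the favorable $D_\beta$-term and the absorbed $D_\alpha$-term then bounds $|V|$ on each excursion by the boundary gap, and since $V$ returns to $0$ at every recoupling the separations do not accumulate, yielding $\sup_{s\le t}|V_s|\le\limfunc{Dist}(D^\alpha,D^\beta)$.
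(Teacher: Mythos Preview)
Your treatment of (i) matches the paper's: both read the angle as an affine function of the local-time increment $\varphi_t$ and conclude monotonicity from the one-sidedness of the reflection.

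Your argument for (ii), however, has a genuine gap. You write that ``by convexity of $D_2$ all of $D_2$ lies on the $Y_t$-side of $M_t$'', but this does not follow from $Y_t\in\overline{D_2}$ and convexity alone: for a generic pair with $Y_t$ well inside $D_2$ and $X_t$ just outside, the perpendicular bisector of $[X_t,Y_t]$ can easily slice through $D_2$. You then use (ii) to derive (iii) (``$X_t$, being the reflection of $Y_t$ across a mirror disjoint from $D_2$, lies outside $D_2$''), so the argument is circular. The paper does not try to get (ii) from a static geometric fact; it argues \emph{inductively} over the intervals $[\sigma_n,\sigma_{n+1})$: at $\sigma_n$ the mirror either coincides with the current face (coupled phase just ended) or, by continuity and the inductive hypothesis, still has $M_{\sigma_n}\cap D_2=\emptyset$; then property (i) says the mirror can only rotate toward the current face normal, and convexity of $D_2$ guarantees that such a rotation keeps $M_t$ outside $D_2$. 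Part (iii) is then read off from (ii) and from the explicit post-coupling formula $Y_t=(X_t^1,\ldots,|X_t^d|)$.

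For (iv) your route is quite different from the paper's and the resolution of your ``main obstacle'' is not convincing as stated. The paper does not apply It\^o to $|Y^\alpha-Y^\beta|^2$ at all; it observes geometrically that, since both $Y^\alpha$ and $Y^\beta$ are mirror images of the \emph{same} $X_t$, the map $Y^\alpha_t\mapsto Y^\beta_t$ is at each instant a rigid motion (a rotation about $M_t^\alpha\cap M_t^\beta$, or a translation when the mirrors are parallel). The separating distance therefore changes only through changes in this rigid motion, i.e.\ only when one of $Y^\alpha,Y^\beta$ is on its boundary, and convexity plus $D_\alpha\subset D_\beta$ gives the sign analysis directly, yielding the stated bound. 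In your semimartingale approach the quadratic-variation contribution is genuinely present (it equals $4\,dt$ on the mixed set), and the claim that it is ``exactly absorbed by the corresponding reflection local time'' via the one-dimensional computation is not substantiated: the local time is singular with respect to $dt$, so there is no pointwise cancellation, and a Gronwall estimate on $|V_t|^2$ with a positive $dt$-drift will not produce a uniform bound without further structure. If you want to push an analytic argument, it would have to exploit the same rigidity (both processes are isometric images of $X_t$) that the paper uses geometrically.
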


\begin{proof}
i) In the notation of Theorem \ref{coupling in polygonal domains}, on the
time interval $[\sigma _{0},\sigma _{1})$ we have $Y_{t}=X_{t}$ so $\angle
\left( m_{t},\nu _{\mathcal{H}}\right) =0$, and the claim is verified in
this case.

On an arbitrary time interval $[\sigma _{n},\sigma _{n+1})$, in an
appropriately chosen coordinate system, $Y_{t}$ is given by Lemma \ref{lemma
for a half-space}. For $t<\xi $, $Y_{t}$ is given by the rotation $R\left(
\varphi _{t}\right) $ of $G\left( Y_{0}-X_{0}\right) X_{t}$ which leaves
invariant the first $\left( d-2\right) $ coordinates, and therefore%
\begin{equation*}
\angle \left( m_{t},\nu _{\mathcal{H}}\right) =\angle \left( m_{0},\nu _{%
\mathcal{H}}\right) +\frac{L_{t}^{0}-L_{t}^{\pi }}{2},
\end{equation*}%
which proves the claim in this case (note that before the coupling time $\xi
$ only one of the non-decreasing processes $L_{t}^{0}$ and $L_{t}^{\pi }$ is
not identically zero).

Since for $t\geq \xi $ we have $Y_{t}=\left( X_{t}^{1},\ldots ,\left\vert
X_{t}^{d}\right\vert \right) $, we have $\angle \left( m_{t},\nu _{\mathcal{H%
}}\right) =0$ which concludes the proof of the claim.

ii) On the time interval $[\sigma _{0},\sigma _{1})$ the processes are
coupled, so there is nothing to prove in this case.

On the time interval $[\sigma _{1},\sigma _{2})$, in an appropriately chosen
coordinate system we have $Y_{t}=\left( X_{t}^{1},\ldots ,\left\vert
X_{t}^{d}\right\vert \right) $, thus the mirror $M_{t}$ coincides with the
boundary hyperplane $\mathcal{H}_{d}=\left\{ \left( z^{1},\ldots
,z^{d}\right) \in \mathbb{R}^{d}:z^{d}=0\right\} $ of $D_{2}$ where the
reflection takes place, thus $M_{t}\cap D_{2}=\varnothing $ in this case.

Inductively, assume the claim is true for $t<\sigma _{n}$. By continuity, $%
M_{\sigma _{n}}\cap D_{2}=\varnothing $, thus $D_{2}$ lies on one side of $%
M_{\sigma _{n}}$. By the previous proof, the angle $\angle \left( m_{t},\nu
_{\mathcal{H}}\right) $ between $m_{t}$ and the inward unit normal $\nu _{%
\mathcal{H}}$ to bounding hyperplane $\mathcal{H}$ of $D_{2}$ where the
reflection takes place decreases to zero; since $D_{2}$ is a convex domain,
it follows that on the time interval $[\sigma _{n},\sigma _{n+1})$ we have $%
M_{t}\cap D_{2}=\varnothing $, concluding the proof.

iii) The first part of the claim follows from the previous proof (when the
processes are not coupled, the mirror (hence $X_{t}$) lies outside $D_{2}$;
by continuity, it follows that at the coupling time $\xi $ we must have $%
X_{\xi }=Y_{\xi }\in \partial D_{2}$).

To prove the second part of the claim, consider an arbitrary time interval $%
[\sigma _{n},\sigma _{n+1})$ between two successive hits of $Y_{t}$
to different bounding hyperplanes of $D_{2}$. In an appropriately
chosen coordinate system, $Y_{t}$ is given by Lemma \ref{lemma for a
half-space}. After the coupling time $\xi $, $Y_{t}$ is given by
$Y_{t}=\left( X_{t}^{1},\ldots ,\left\vert X_{t}^{d}\right\vert
\right) $, and therefore if $X_{t}\in D_{2}$ (thus $X_{t}^{d}\geq
0$) we have $Y_{t}=\left( X_{t}^{1},\ldots ,X_{t}^{d}\right)
=X_{t}$, concluding the proof.

iv) Let $M_{t}^{\alpha }$ and $M_{t}^{\beta }$ denote the mirrors of the
coupling in $D^{\alpha }$, respectively $D^{\beta }$, with the same driving
Brownian motion $X_{t}$.

Since $Y_{t}^{\alpha }$ and $X_{t}$ are symmetric with respect to $%
M_{t}^{\alpha }$, and $Y_{t}^{\beta }$ and $X_{t}$ are symmetric with
respect to $M_{t}^{\beta }$, it follows that $Y_{t}^{\beta }$ is obtained
from $Y_{t}^{\beta }$ by a rotation which leaves invariant the hyperplane $%
M_{t}^{\alpha }\cap M_{t}^{\beta }$, or by a translation by a vector
orthogonal to $M_{t}^{\alpha }$ (in the case when $M_{t}^{\alpha }$ and $%
M_{t}^{\beta }$ are parallel).

The angle of rotation (respectively the vector of translation) is altered
only when either $Y_{t}^{\alpha }$ or $Y_{t}^{\beta }$ are on the boundary
of $D_{\alpha }$, respectively $D_{\beta }$. Since $D_{\alpha }\subset
D_{\beta }$ are convex domains, the angle of rotation (respectively the
vector of translation) decreases when $Y_{t}^{\beta }\in D_{\beta }$ or when
$Y_{t}^{\alpha }\in \partial D_{\alpha }$ and $\left( Y_{t}^{\beta
}-Y_{t}^{\alpha }\right) \cdot \nu _{D_{\alpha }}\left( Y_{t}^{\alpha
}\right) >0$ (in these cases $Y_{t}^{\beta }$ and $Y_{t}^{\alpha }$ receive
a push such that the distance $\left\vert Y_{t}^{\alpha }-Y_{t}^{\beta
}\right\vert $ is decreased), thus the maximum distance $\left\vert
Y_{t}^{\alpha }-Y_{t}^{\beta }\right\vert $ is attained when $Y_{t}^{\alpha
}\in \partial D_{\alpha }$ and $\left( Y_{t}^{\beta }-Y_{t}^{\alpha }\right)
\cdot \nu _{D_{\alpha }}\left( Y_{t}^{\alpha }\right) \leq 0$, and the
formula follows.
\end{proof}

\section{The proof of Theorem \protect\ref{main theorem}\label{The proof of
the main theorem}}

By Remark \ref{reduction to the case D_1=R}, it suffices to consider the
case when $D_{1}=\mathbb{R}^{d}$ and $D_{2}\subset \mathbb{R}^{d}$ is a
convex bounded domain with smooth boundary. To simplify the notation, we
will drop the index and write $D$ for $D_{2}$ in the sequel.

Let $\left( D_{k}\right) _{k\geq 1}$ be an increasing sequence of convex
polygonal domains in $\mathbb{R}^{d}$ with $\overline{D_{n}}\subset D_{n+1}$
and $\cup _{n\geq 1}D_{n}=D$.

Consider $\left( Y_{t}^{n},X_{t}\right) _{t\geq 0}$ a sequence of mirror
couplings in $\left( D_{n},\mathbb{R}^{d}\right) $ with starting point $x\in
D_{1}$, with driving Brownian motion $\left( W_{t}\right) _{t\geq 0}$, $%
W_{0}=0$ given by Theorem \ref{coupling in polygonal domains}.

By Proposition \ref{properties of mirror coupling in polygonal domains}, for
any $t>0$ we have%
\begin{equation*}
\sup_{s\leq t}\left\vert Y_{s}^{m}-Y_{s}^{n}\right\vert \leq \limfunc{Dist}%
\left( D_{n},D_{m}\right) =\max_{\substack{ x_{n}\in \partial D_{n},x_{m}\in
\partial D_{m}  \\ \left( x_{m}-x_{n}\right) \cdot \nu _{D_{n}}\left(
x_{n}\right) \leq 0}}\left\vert x_{n}-x_{m}\right\vert \underset{%
n,m\rightarrow \infty }{\rightarrow }0,
\end{equation*}%
hence $Y_{t}^{n}$ converges a.s. in the uniform topology to a continuous
process $Y_{t}$.

Since $\left( Y^{n}\right) _{n\geq 1}$ are reflecting Brownian motions in $%
\left( D_{n}\right) _{n\geq 1}$ and $D_{n}\nearrow D$, the law of $Y_{t}$ is
that of a reflecting Brownian motion in $D$, that $Y_{t}$ is a reflecting
Brownian motion in $D$ starting at $x\in D$ (see \cite{Burdzy-Weak
convergence}). Also note that since $Y_{t}^{n}$ are adapted to the
filtration $\mathcal{F}^{W}=\left( \mathcal{F}_{t}\right) _{t\geq 0}$
generated by the Brownian motion $W_{t}$, so is $Y_{t}$.

By construction, the driving Brownian motion $Z_{t}^{n}$ of $Y_{t}^{n}$
satisfies%
\begin{equation*}
Z_{t}^{n}=\int_{0}^{t}G\left( Y_{t}^{n}-X_{t}\right) dW_{t},\qquad t\geq 0.
\end{equation*}

Consider the process%
\begin{equation*}
Z_{t}=\int_{0}^{t}G\left( Y_{t}-X_{t}\right) dB_{t},
\end{equation*}%
and note that since $Y$ is $\mathcal{F}^{W}$-adapted and $\left\vert
\left\vert G\right\vert \right\vert =1$, by L\'{e}vy's characterization of
Brownian motion, $Z_{t}$ is a free $d$-dimensional Brownian motion starting
at $Z_{0}=0$, also adapted to the filtration $\mathcal{F}^{W}$.

We will show that $Z$ is the driving process of the reflecting Brownian
motion $Y_{t}$, i.e. we have%
\begin{equation*}
Y_{t}=x+Z_{t}+L_{t}^{Y}=x+\int_{0}^{t}G\left( Y_{s}-B_{s}\right)
dW_{s}+L_{t}^{Y},\qquad t\geq 0.
\end{equation*}

Note that the mapping $z\longmapsto G\left( z\right) $ is continuous with
respect to the norm $\left\vert \left\vert A\right\vert \right\vert
=\left\vert \left\vert \left( a_{ij}\right) \right\vert \right\vert
=\sum_{i,j=1}^{d}a_{ij}^{2}$ of $d\times d$ matrices at all points $z\in
\mathbb{R}^{d}-\left\{ 0\right\} $, hence $G\left( Y_{s}^{n}-X_{s}\right)
\underset{n\rightarrow \infty }{\rightarrow }G\left( Y_{s}-X_{s}\right) $ if
$Y_{s}-X_{s}\neq 0$. If $Y_{s}-X_{s}=0$, then either $Y_{s}=B_{s}\in D$ or $%
Y_{s}=X_{s}\in \partial D$.

If $Y_{s}=B_{s}\in D$, since $D_{n}\nearrow D$, there exists $N\geq 1$ such
that $B_{s}\in D_{N}$, hence $B_{s}\in D_{n}$ for all $n\geq N$. By
Proposition \ref{properties of mirror coupling in polygonal domains}, it
follows that $Y_{s}^{n}=B_{s}$ for all $n\geq N$, hence in this case we also
have $G\left( Y_{s}^{n}-B_{s}\right) =G\left( 0\right) \underset{%
n\rightarrow \infty }{\rightarrow }G\left( 0\right) =G\left(
Y_{s}-B_{s}\right) $.

If $Y_{s}=B_{s}\in \partial D$, since $\overline{D_{n}}\subset D$ we have $%
Y_{s}^{n}-B_{s}\neq 0$, and therefore by the definition of $G$ we have:%
\begin{eqnarray*}
&&\int_{0}^{t}\left\vert \left\vert G\left( Y_{s}^{n}-X_{s}\right) -G\left(
Y_{s}-X_{s}\right) \right\vert \right\vert ^{2}1_{Y_{s}=B_{s}\in \partial
D}ds \\
&=&\int_{0}^{t}\left\vert \left\vert H\left( \frac{Y_{s}^{n}-X_{s}}{%
\left\vert \left\vert Y_{s}^{n}-X_{s}\right\vert \right\vert }\right)
-I\right\vert \right\vert ^{2}1_{Y_{s}=X_{s}\in \partial D}ds \\
&=&\int_{0}^{t}\left\vert \left\vert I-2\frac{Y_{s}^{n}-X_{s}}{\left\vert
\left\vert Y_{s}^{n}-X_{s}\right\vert \right\vert }\left( \frac{%
Y_{s}^{n}-X_{s}}{\left\vert \left\vert Y_{s}^{n}-X_{s}\right\vert
\right\vert }\right) ^{\prime }-I\right\vert \right\vert
^{2}1_{Y_{s}=X_{s}\in \partial D}ds \\
&=&\int_{0}^{t}\left\vert \left\vert 2\frac{Y_{s}^{n}-X_{s}}{\left\vert
\left\vert Y_{s}^{n}-X_{s}\right\vert \right\vert }\left( \frac{%
Y_{s}^{n}-X_{s}}{\left\vert \left\vert Y_{s}^{n}-X_{s}\right\vert
\right\vert }\right) ^{\prime }\right\vert \right\vert ^{2}1_{Y_{s}=X_{s}\in
\partial D}ds \\
&=&4\int_{0}^{t}1_{Y_{s}=X_{s}\in \partial D}ds \\
&\leq &4\int_{0}^{t}1_{\partial D}\left( Y_{s}\right) ds \\
&=&0,
\end{eqnarray*}%
since $Y_{t}$ is a reflecting Brownian motion in $D$, and therefore it
spends zero Lebesgue time on the boundary of $D$.

Since $\left\vert \left\vert G\right\vert \right\vert =1$, using the above
and the bounded convergence theorem we obtain%
\begin{equation*}
\lim_{n\rightarrow \infty }\int_{0}^{t}\left\vert \left\vert G\left(
Y_{s}^{n}-X_{s}\right) -G\left( Y_{s}-X_{s}\right) \right\vert \right\vert
^{2}ds=0,
\end{equation*}%
and therefore by Doob's inequality it follows that%
\begin{equation*}
E\sup_{s\leq t}\left\vert Z_{s}^{n}-Z_{s}\right\vert ^{2}\leq cE\left\vert
Z_{t}^{n}-Z_{t}\right\vert ^{2}\leq cE\int_{0}^{t}\left\vert \left\vert
G\left( Y_{s}^{n}-X_{s}\right) -G\left( Y_{s}-X_{s}\right) \right\vert
\right\vert ^{2}ds\underset{n\rightarrow \infty }{\rightarrow }0,
\end{equation*}%
for any $t\geq 0$, which shows that $Z_{t}^{n}$ converges uniformly on
compact sets to $Z_{t}=\int_{0}^{t}G\left( Y_{s}-X_{s}\right) dW_{s}$.

From the construction, $Z_{t}^{n}$ is the driving Brownian motion for $%
Y_{t}^{n}$, that is%
\begin{equation*}
Y_{t}^{n}=x+Z_{t}^{n}+\int_{0}^{t}\nu _{D_{n}}\left( Y_{s}^{n}\right)
dL_{s}^{Y_{n}},
\end{equation*}%
and passing to the limit with $n\rightarrow \infty $ we obtain%
\begin{equation*}
Y_{t}=x+Z_{t}+A_{t}=x+\int_{0}^{t}G\left( Y_{s}-X_{s}\right)
dW_{s}+A_{t},\qquad t\geq 0,
\end{equation*}%
where $A_{t}=\lim_{n\rightarrow \infty }\int_{0}^{t}\nu _{D_{n}}\left(
Y_{s}^{n}\right) dL_{s}^{Y_{n}}$.

It remains to show that $A_{t}$ is a process of bounded variation. For an
arbitrary partition $0=t_{0}<t_{1}<\ldots t_{l}=t$ of $\left[ 0,t\right] $
we have%
\begin{eqnarray*}
E\sum_{i=1}^{l}\left\vert A_{t_{i}}-A_{t_{i-1}}\right\vert
&=&\lim_{n\rightarrow \infty }E\sum_{i=1}^{l}\left\vert
\int_{t_{i-1}}^{t_{i}}\nu _{D_{n}}\left( Y_{s}^{n}\right)
dL_{s}^{Y_{n}}\right\vert \\
&\leq &\lim \sup E~L_{t}^{Y_{n}} \\
&=&\lim \sup \int_{0}^{t}\int_{\partial D_{n}}p_{D_{n}}\left( s,x,y\right)
\sigma _{n}\left( dy\right) ds \\
&\leq &c\sqrt{t},
\end{eqnarray*}%
where $\sigma _{n}$ is the surface measure on $\partial D_{n}$ and the last
inequality above follows from the estimates in \cite{Bass-Some potential
theory} on the Neumann heat kernels $p_{D_{n}}\left( t,x,y\right) $ (see the
remarks preceding Theorem 2.1 and the proof of Theorem 2.4 in \cite%
{Burdzy-Coalescence}).

From the above it follows that $A_{t}=Y_{t}-x-Z_{t}$ is a continuous, $%
\mathcal{F}^{W}$-adapted process ($Y_{t}$, $Z_{t}$ are continuous, $\mathcal{F%
}^{W}$-adapted processes) of bounded variation.

By the uniqueness in the Doob-Meyer semimartingale decomposition of $Y_{t}$
- reflecting Brownian motion in $D$, it follows that%
\begin{equation*}
A_{t}=\int_{0}^{t}\nu _{D}\left( Y_{s}\right) dL_{s}^{Y},\qquad t\geq 0,
\end{equation*}%
where $L^{Y}$ is the local time of $Y$ on the boundary $\partial D$, and
therefore the reflecting Brownian motion $Y_{t}$ in $D$ constructed above is
a strong solution to%
\begin{equation*}
Y_{t}=x+\int_{0}^{t}G\left( Y_{s}-X_{s}\right) dW_{s}+\int_{0}^{t}\nu
_{D}\left( Y_{s}\right) dL_{s}^{Y},\qquad t\geq 0,
\end{equation*}%
or equivalent, the driving Brownian motion $Z_{t}=\int_{0}^{t}G\left(
Y_{s}-X_{s}\right) dW_{s}$ of $Y_{t}$ is a strong solution to
\begin{equation*}
Z_{t}=\int_{0}^{t}G\left( \tilde{\Gamma}\left( y+Z\right) _{s}-X_{s}\right)
dW_{s},\qquad t\geq 0,
\end{equation*}%
concluding the proof of Theorem \ref{main theorem}.

\section{Extensions and applications\label{Extensions and applications}}

As an application of the construction of mirror coupling, we will present a
unifying proof of the two most important results on Chavel's conjecture.

It is not difficult to prove that the Dirichlet heat kernel is an increasing
function with respect to the domain. Since for the Neumann heat kernel $%
p_{D}\left( t,x,y\right) $ of a smooth bounded domain $D\subset \mathbb{R}%
^{d}$ we have%
\begin{equation*}
\lim_{t\rightarrow \infty }p_{D}\left( t,x,y\right) =\frac{1}{\limfunc{vol}%
\left( D\right) },
\end{equation*}%
the monotonicity in the case of the Neumann heat kernel should be reversed.

The above observation was conjectured by Isaac Chavel (\cite{Chavel}), as
follows:

\begin{conjecture}[Chavel's conjecture, \protect\cite{Chavel}]
\label{Chavel's conjecture}Let $D_{1,2}\subset \mathbb{R}^{d}$ be smooth
bounded convex domains in $\mathbb{R}^{d}$, $d\geq 1$, and let $%
p_{D_{1}}\left( t,x,y\right) $, $p_{D_{2}}\left( t,x,y\right) $ denote the
Neumann heat kernels in $D_{1}$, respectively $D_{2}$. If $D_{2}\subset D_{1}
$, then
\begin{equation}
p_{D_{1}}\left( t,x,y\right) \leq p_{D_{2}}\left( t,x,y\right) ,
\label{Chavel inequality}
\end{equation}%
for any $t\geq 0$ and $x,y\in D_{1}$.
\end{conjecture}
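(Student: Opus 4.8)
The plan is to realize both Neumann heat kernels as transition densities of reflecting Brownian motions that are \emph{mirror-coupled}, and to read off the pointwise kernel inequality from an almost-sure distance-monotonicity property of the coupling. Throughout I take $x,y\in D_{2}$ (necessarily so, since $p_{D_{2}}$ is only defined on $\overline{D_{2}}$, and this is the content of the inequality), and I work under the hypotheses of Theorem \ref{main theorem}, namely $\overline{D_{2}}\subset D_{1}$ with $D_{2}$ convex; the boundary-contact situation genuinely allowed in Conjecture \ref{Chavel's conjecture} is recovered at the end by approximation. First I would invoke Theorem \ref{main theorem} to construct, on a common probability space with driving Brownian motion $W_{t}$, a mirror coupling $(X_{t},Y_{t})_{t\ge 0}$ started at $X_{0}=Y_{0}=x$, where $X_{t}$ is reflecting Brownian motion in $D_{1}$ and $Y_{t}$ is reflecting Brownian motion in $D_{2}$. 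Since the Neumann heat kernel is exactly the transition density of reflecting Brownian motion, the law of $X_{t}$ has density $p_{D_{1}}(t,x,\cdot)$ and the law of $Y_{t}$ has density $p_{D_{2}}(t,x,\cdot)$.

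The heart of the argument is the geometric claim
\begin{equation*}
\left\vert Y_{t}-y_{0}\right\vert \le \left\vert X_{t}-y_{0}\right\vert ,\qquad t\ge 0,\ y_{0}\in \overline{D_{2}},\quad \text{a.s.}
\end{equation*}
To establish it, recall that whenever $X_{t}\ne Y_{t}$ the two processes are mirror images of one another across the mirror $M_{t}$, and that by property (ii) of Proposition \ref{properties of mirror coupling in polygonal domains} the mirror lies outside $D_{2}$; since $D_{2}$ is convex it therefore lies entirely on one side of $M_{t}$, namely the side containing $Y_{t}\in \overline{D_{2}}$. As $X_{t}$ is the reflection of $Y_{t}$ across $M_{t}$, decomposing positions along the normal to $M_{t}$ and using that $y_{0}$ sits on the same side as $Y_{t}$ gives the inequality (with equality when $X_{t}=Y_{t}$). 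Since Proposition \ref{properties of mirror coupling in polygonal domains} is stated only for polygonal $D_{2}$, for general convex $D_{2}$ I would pass to the limit along the approximating polygonal couplings $(Y_{t}^{n},X_{t})$ from the proof of Theorem \ref{main theorem}: for fixed $y_{0}\in D_{2}$ one has $y_{0}\in \overline{D_{n}}$ for all large $n$, so $\left\vert Y_{t}^{n}-y_{0}\right\vert \le \left\vert X_{t}-y_{0}\right\vert$, and letting $n\to \infty$ (with $Y^{n}\to Y$ uniformly on compacts, by Theorem \ref{coupling in polygonal domains} and the proof of Theorem \ref{main theorem}) yields the claim for $Y$; boundary points $y_{0}\in \partial D_{2}$ follow by continuity.

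Granting distance-monotonicity, the kernel inequality is immediate. Fixing $y\in D_{2}$ and a small $\varepsilon>0$ (so that $B(y,\varepsilon)\subset D_{2}$), the almost-sure inclusion $\{\,|X_{t}-y|<\varepsilon\,\}\subseteq\{\,|Y_{t}-y|<\varepsilon\,\}$ obtained with $y_{0}=y$ gives
\begin{equation*}
\int_{B(y,\varepsilon)}p_{D_{1}}(t,x,z)\,dz=P\bigl(|X_{t}-y|<\varepsilon\bigr)\le P\bigl(|Y_{t}-y|<\varepsilon\bigr)=\int_{B(y,\varepsilon)}p_{D_{2}}(t,x,z)\,dz.
\end{equation*}
Dividing by $\limfunc{vol}\bigl(B(y,\varepsilon)\bigr)$ and letting $\varepsilon\to 0$, Lebesgue differentiation together with the continuity of the Neumann heat kernels on smooth domains yields $p_{D_{1}}(t,x,y)\le p_{D_{2}}(t,x,y)$, which is precisely \eqref{Chavel inequality}. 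Notice that the argument uses only the convexity of the \emph{smaller} domain $D_{2}$, the reflecting motion $X_{t}$ in $D_{1}$ entering merely as the mirror image.

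I expect the main obstacle to be the two limiting reductions rather than the coupling computation itself. The first is the passage from polygonal to general convex $D_{2}$ in the geometric claim, which requires that the uniform limit $Y^{n}\to Y$ interact correctly with property (ii) near the boundary; the second, and more delicate, is relaxing the strict inclusion $\overline{D_{2}}\subset D_{1}$ demanded by Theorem \ref{main theorem} to the mere containment $D_{2}\subset D_{1}$ of Conjecture \ref{Chavel's conjecture}, in which $D_{2}$ may touch $\partial D_{1}$. This is exactly the point at which the geometric hypotheses of Chavel and of Kendall (the ball conditions) should enter: under them one approximates the configuration by strictly nested smooth domains and controls the resulting couplings, and hence the heat kernels, through the stability estimate in property (iv) of Proposition \ref{properties of mirror coupling in polygonal domains}.
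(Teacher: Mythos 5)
The statement you were asked to prove is a \emph{conjecture}, and the paper does not prove it; on the contrary, it cites Bass and Burdzy \cite{Bass-Burdzy} for the fact that the conjecture fails without additional hypotheses. What the paper actually proves (Theorem \ref{Theorem on Chavel's conjecture}) is the special case in which there is a ball $B$ centered at $x$ or at $y$ with $D_{2}\subset B\subset D_{1}$. Your argument claims the full inequality with no ball condition, so it must contain an error, and the error sits precisely at your use of Proposition \ref{properties of mirror coupling in polygonal domains}(ii). That property --- the mirror $M_{t}$ stays outside $D_{2}$ --- is proved only for the coupling in $\left( \mathbb{R}^{d},D_{2}\right) $, i.e.\ when $X_{t}$ is a \emph{free} Brownian motion. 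In the coupling in $\left( D_{1},D_{2}\right) $ with $D_{1}$ bounded, $X_{t}$ reflects on $\partial D_{1}$, and each such reflection moves the mirror in a way that is not controlled by the convexity of $D_{2}$: the push $\nu _{D_{1}}\left( X_{t}\right) dL_{t}^{X}$ can rotate or translate $M_{t}$ toward $D_{2}$, and nothing prevents $M_{t}$ from entering $D_{2}$. The localization of Remark \ref{reduction to the case D_1=R} only replaces $D_{1}$ by $\mathbb{R}^{d}$ on the intervals $\left[ \xi _{n},\tau _{n}\right] $ between coupling and $\varepsilon $-decoupling; outside those intervals $X_{t}$ genuinely visits $\partial D_{1}$ and property (ii) is unavailable. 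Consequently your claim $\left\vert Y_{t}-y_{0}\right\vert \leq \left\vert X_{t}-y_{0}\right\vert $ for \emph{every} $y_{0}\in \overline{D_{2}}$ is unjustified and cannot be true in the generality you assert.

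The paper's own proof of the partial result works with a single reference point $y$ and uses the ball condition exactly to plug this hole: one tracks only $\left\vert X_{t}-y\right\vert $ versus $\left\vert Y_{t}-y\right\vert $, and at the critical times when the two distances coincide the mirror passes through $y$, so the hypothesis $D_{2}\subset B\left( y,R\right) \subset D_{1}$ forces $X_{t}\notin \partial D_{1}$ at those times; hence the only boundary push that can act when the ordering is about to be violated is the one $Y_{t}$ receives on $\partial D_{2}$, which by convexity of $D_{2}$ decreases $\left\vert Y_{t}-y\right\vert $. So the ball hypothesis is not, as you suggest at the end, merely a device for relaxing $\overline{D_{2}}\subset D_{1}$ to $D_{2}\subset D_{1}$; it is what makes the distance-monotonicity claim true at all. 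Your final step (comparing $P\left( \left\vert X_{t}-y\right\vert <\varepsilon \right) $ with $P\left( \left\vert Y_{t}-y\right\vert <\varepsilon \right) $ and differentiating) coincides with the paper's, and your observation that only the convexity of the smaller domain is needed also appears there; but without the ball condition the geometric core of your argument does not go through, consistent with the existence of counterexamples to the unrestricted conjecture.
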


\begin{remark}
The smoothness assumption in the above conjecture is meant to insure the
a.e. existence the inward unit normal to the boundaries of $D_{1}$ and $%
D_{2} $, that is the boundary should have locally a differentiable
parametrization. Requiring that the boundary of the domain is of class $%
C^{1,\alpha }$ ($0<\alpha <1$) is a convenient hypothesis on the smoothness
of the domains $D_{1,2}$.

In order to simplify the proof, we will assume that $D_{1,2}$ are smooth $%
C^{2}$ domains (the proof can be extended to a more general setup, by
approximating $D_{1,2}$ by less smooth domains).
\end{remark}

Among the positive results on Chavel conjecture, the most general known
results (and perhaps the easiest to use in practice) are due to I. Chavel
and W. Kendall (see \cite{Chavel}, \cite{Kendall}), and they show that if
there exists a ball $B$ centered at either $x$ or $y$ such that $%
D_{2}\subset B\subset D_{1}$, then the inequality (\ref{Chavel inequality})
in Chavel's conjecture holds true for any $t>0$.

While there are also other positive results which suggest that Chavel's
conjecture is true (see for example \cite{CarmonaZheng}, \cite{Hsu}), in
\cite{Bass-Burdzy} R. Bass and K. Burdzy showed that Chavel's conjecture
does not hold in its full generality (i.e. without additional hypotheses).

We note that both the proof of Chavel (the case when $D_{1}$ is a
ball centered at either $x$ or $y$) and Kendall (the case when
$D_{2}$ is a ball centered at either $x$ or $y$) relies in an
essential way that one of the domains is a ball: the first uses an
integration by parts technique, while the later uses a coupling
argument of the radial parts of Brownian motion, and none of them
can be applied to the other case.

Using the mirror coupling, we can derive a simple, unifying proof of these
two important results, as follows:

\begin{theorem}
\label{Theorem on Chavel's conjecture}Let $D_{2}\subset D_{1}\subset \mathbb{%
R}^{d}$ be smooth bounded domains and assume that $D_{2}$ is convex. If for $%
x,y\in D_{2}$ there exists a ball $B$ centered at either $x$ or $y$ such
that $D_{2}\subset B\subset D_{1}$, then for all $t\geq 0$ we have%
\begin{equation}
p_{D_{1}}\left( t,x,y\right) \leq p_{D_{2}}\left( t,x,y\right) .
\label{Chavel's inequality}
\end{equation}
\end{theorem}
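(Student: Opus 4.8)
The plan is to run a single mirror coupling $(X_t,Y_t)$ of reflecting Brownian motions in $(D_1,D_2)$ and to extract the pointwise heat kernel inequality from a pathwise domination of the distances of $X_t$ and $Y_t$ to the centre of the ball. Write $x_0$ for the centre of $B$ (so $x_0\in\{x,y\}$ and $D_2\subset B=B(x_0,r)\subset D_1$), and let $p$ denote the other of the two points $x,y$. Starting the coupling of Theorem \ref{main theorem} from $X_0=Y_0=p\in D_2$, the marginal laws are $X_t\sim p_{D_1}(t,p,\cdot)$ and $Y_t\sim p_{D_2}(t,p,\cdot)$. The whole argument rests on the claim
\[
|Y_t-x_0|\le |X_t-x_0|\qquad\text{for all }t\ge 0,\ \text{a.s.}
\]
Granting this, for every $r'>0$ one has $\{\,|Y_t-x_0|<r'\,\}\supseteq\{\,|X_t-x_0|<r'\,\}$, hence
\[
\int_{B(x_0,r')}p_{D_2}(t,p,w)\,dw=P(|Y_t-x_0|<r')\ge P(|X_t-x_0|<r')=\int_{B(x_0,r')}p_{D_1}(t,p,w)\,dw .
\]
Dividing by $\limfunc{vol}B(x_0,r')$ and letting $r'\downarrow 0$, Lebesgue differentiation together with the continuity of the Neumann heat kernels yields $p_{D_2}(t,p,x_0)\ge p_{D_1}(t,p,x_0)$; by the symmetry $p_{D_i}(t,p,x_0)=p_{D_i}(t,x_0,p)$ this is exactly $p_{D_1}(t,x,y)\le p_{D_2}(t,x,y)$, in both cases $x_0=x$ and $x_0=y$.

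It remains to prove the distance domination, and this is where the convexity of $D_2$ and the ball condition enter. The key geometric input is that, as long as the processes are uncoupled, $Y_t$ is the mirror image of $X_t$ in the hyperplane $M_t$, and $M_t$ does not meet $D_2$ (property ii) of Proposition \ref{properties of mirror coupling in polygonal domains}). Since $x_0\in D_2$, the centre then lies strictly on the $D_2$-side of $M_t$, the same side as $Y_t$, whereas $X_t$ is its reflection; comparing distances to $x_0$ across the hyperplane $M_t$ gives $|Y_t-x_0|\le|X_t-x_0|$ immediately, and when $X_t=Y_t$ there is equality. Thus the claim reduces to showing that $M_t$ stays outside $D_2$ throughout the coupling. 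I would first establish this for a convex polygonal $D_2$ with $X$ a free Brownian motion (i.e. $D_1=\mathbb{R}^d$) directly from Theorem \ref{coupling in polygonal domains} and Proposition \ref{properties of mirror coupling in polygonal domains}, then pass to a smooth convex $D_2$ through the approximating sequence $D_n\nearrow D_2$ of Section \ref{The proof of the main theorem}: since $Y^n_t\to Y_t$ uniformly on compacts and $x_0\in D_n$ for large $n$, the inequality $|Y^n_t-x_0|\le|X_t-x_0|$ survives in the limit.

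The genuine obstacle is that for Chavel's inequality $X$ must carry the law $p_{D_1}(t,p,\cdot)$, so it must reflect in $D_1$ rather than run freely, and its reflections at $\partial D_1$ could a priori push the mirror $M_t$ into $D_2$ and destroy the domination. This is precisely what the ball condition prevents, and I would handle it by splitting according to whether $X_t$ lies in $B$. While $X_t\in\overline B$ the process $X$ cannot touch $\partial D_1$ (as $\overline B\subset D_1$), so $X$ behaves locally as a free Brownian motion and the mirror-image argument above applies verbatim; while $X_t\notin\overline B$ one has $|X_t-x_0|\ge r\ge|Y_t-x_0|$ automatically, because $Y_t\in\overline{D_2}\subset\overline B$, so the inequality holds trivially irrespective of what the $\partial D_1$-reflections do to the mirror. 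Gluing these two regimes by continuity — and checking that the favourable inward push of $Y$ at $\partial D_2$ (which, by convexity of $D_2$ and $x_0\in D_2$, only decreases $|Y_t-x_0|$) never spoils the bound at the interface $\{|X_t-x_0|=r\}$ — is the technical heart of the proof; everything else is the soft measure-theoretic passage described above.
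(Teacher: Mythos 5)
Your overall strategy is the same as the paper's: run the mirror coupling of Theorem \ref{main theorem} started at the non-centre point, prove the pathwise domination $\left\vert Y_{t}-x_{0}\right\vert \leq \left\vert X_{t}-x_{0}\right\vert$ of the distances to the centre $x_{0}$ of $B$, and convert this into the heat kernel inequality by comparing the probabilities of small balls around $x_{0}$ and shrinking the radius. The reduction step, the trivial regime $X_{t}\notin \overline{B}$, and the observation that the inward push of $Y$ at $\partial D_{2}$ only decreases $\left\vert Y_{t}-x_{0}\right\vert$ (convexity of $D_{2}$ together with $x_{0}\in D_{2}$) are all correct and are exactly the ingredients the paper uses.

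The gap is in the regime $X_{t}\in \overline{B}$ \emph{after} $X$ has visited $\partial D_{1}$. Your mechanism there is the invariant $M_{t}\cap D_{2}=\varnothing$ (part ii) of Proposition \ref{properties of mirror coupling in polygonal domains}), which places $x_{0}\in D_{2}$ on the $Y_{t}$-side of the mirror. But that invariant is proved only for the coupling in $\left( \mathbb{R}^{d},D_{2}\right)$, by propagating it inductively from the coupled initial state using only the reflections of $Y$ on the faces of $D_{2}$. Once $X$ has reflected at $\partial D_{1}$ (necessarily outside $\overline{B}$), the mirror receives a rotation that is not controlled by the convexity of $D_{2}$ and may intersect $D_{2}$ when $X$ re-enters $B$; the ball condition does not prevent this, so ``the mirror-image argument applies verbatim'' is unjustified, and continuity at the interface $\left\{ \left\vert X_{t}-x_{0}\right\vert =r\right\}$ does not restore the invariant either. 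What does survive re-entry is only the weaker fact that $x_{0}$ lies on the closed $Y_{t}$-side of $M_{t}$ (equivalently $g_{t}:=\left\vert X_{t}-x_{0}\right\vert -\left\vert Y_{t}-x_{0}\right\vert \geq 0$, which holds at the entrance time since $\left\vert Y_{t}-x_{0}\right\vert \leq r=\left\vert X_{t}-x_{0}\right\vert$ there), and it is this weaker invariant that must be propagated. That requires the dynamic argument the paper actually gives: the mirror is stationary except when one of the processes reflects; for $g$ to become negative it must first vanish; at a zero of $g$ with the processes decoupled the mirror passes through $x_{0}$, hence $\left\vert X_{t}-x_{0}\right\vert =\left\vert Y_{t}-x_{0}\right\vert <r$ and $X_{t}\in B$ cannot lie on $\partial D_{1}$, so near that time the only motion of the mirror comes from the reflection of $Y$ at $\partial D_{2}$, which pushes $x_{0}$ back to the favourable side; at a zero with the processes coupled they move together until $Y$ reflects, which again makes $g$ nonnegative. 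Replacing your static invariant by this crossing argument closes the gap; the rest of your proof then goes through.
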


\begin{proof}
Consider $x,y\in D_{2}$ fixed and assume without loss of generality that $%
D_{2}\subset B=B\left( y,R\right) \subset D_{1}$ for some $R>0$.

Consider a mirror coupling $\left( X_{t},Y_{t}\right) $ of reflecting
Brownian motions in $\left( D_{1},D_{2}\right) $ starting at $y\in D_{2}$.

The idea of the proof is to show that at all times $Y_{t}$ is at a distance
from $y$ smaller than (or equal) to that of $X_{t}$ from $y$.

To prove the claim, consider a time $t_{0}\geq 0$ when the processes are at
the same distance from $y$, that is $\left\vert Y_{t_{0}}-y\right\vert
=\left\vert X_{t_{0}}-x\right\vert $. If $X_{t_{0}}=Y_{t_{0}}$, for $t\geq
t_{0}$ the distances from $X_{t}$ and $Y_{t}$ to $y$ will remain equal until
the time $t_{1}$ when the processes hit the boundary of $D_{2}$, and $Y_{t}$
receives a push in the direction of the inward unit normal to the boundary
of $D_{2}$. Since $D_{2}$ is convex, this decreases the distance of $Y_{t}$
from $y$, and the claim follows in this case.

If the processes are decoupled and $\left\vert Y_{t_{0}}-y\right\vert
=\left\vert X_{t_{0}}-x\right\vert $, the hyperplane $M_{t_{0}}$ of symmetry
between $X_{t_{0}}$ and $Y_{t_{0}}$ passes through $y$, and the ball
condition shows that we cannot have $X_{t_{0}}\in \partial D_{1}$. Therefore
for $t\geq t_{0}$, the processes $X_{t}$ and $Y_{t}$ will remain at the same
distance from $y$ until $Y_{t}\in \partial D_{2}$, when the distance of $%
Y_{t}$ from $y$ is again decreased by the local push received as in the
previous case, concluding the proof of the claim.

Therefore, for any $\varepsilon >0$ we have%
\begin{equation*}
P^{x}\left( \left\vert X_{t}-y\right\vert <\varepsilon \right) \leq
P^{x}\left( \left\vert Y_{t}-y\right\vert <\varepsilon \right) ,
\end{equation*}%
and dividing by the volume of the ball $B\left( y,\varepsilon \right) $ and
passing to the limit with $\varepsilon \searrow 0$, from the continuity of
the transition density of the reflecting Brownian motion in the space
variable we obtain
\begin{equation*}
p_{D_{1}}\left( t,x,y\right) \leq p_{D_{2}}\left( t,x,y\right) ,
\end{equation*}%
for any $t\geq 0$, concluding the proof of the theorem.
\end{proof}

\begin{remark}
As also pointed out by Kendall in \cite{Kendall} (the case when $D_{2}$ is a
ball), we note that the convexity of the larger domain $D_{1}$ is not needed
in the above proof in order to derive the validity of condition (\ref{Chavel
inequality}) in Chavel's conjecture.
\end{remark}

\begin{remark}
\label{possible extensions of Chavel}We also note that the above proof uses
only geometric considerations on the relative position of the reflecting
Brownian motions coupled by mirror coupling. Analytically, the above proof
reduces to showing that $R_{t}=\left\vert X_{\alpha _{t}}\right\vert
^{2}-\left\vert Y_{\alpha _{t}}\right\vert ^{2}\geq 0$ for all $0\leq t<\xi
=\inf \left\{ s>0:X_{s}=Y_{s}\right\} $, where $R_{t}$ is the solution of
the following stochastic differential equation%
\begin{equation}
R_{t}=R_{0}+2\int_{0}^{t}R_{s}dB_{s}+2S_{t},
\end{equation}%
where $B_{t}=\int_{0}^{\alpha _{t}}\frac{X_{s}-Y_{s}}{\left\vert
X_{s}-Y_{s}\right\vert ^{2}}\cdot dW_{s}$ is $1$-dimensional Brownian
motion, $\alpha _{t}=A_{t}^{-1}$ is the inverse of the non-decreasing
process $A_{t}$ defined by%
\begin{equation*}
A_{t}=\int_{0}^{t}\frac{1}{\left\vert X_{s}-Y_{s}\right\vert ^{2}}ds,
\end{equation*}%
and
\begin{equation*}
S_{t}=\int_{0}^{t}X_{\alpha _{s}}\cdot \nu _{D_{1}}\left( X_{\alpha
_{s}}\right) dL_{\alpha _{s}}^{X}-\int_{0}^{t}Y_{\alpha _{s}}\cdot \nu
_{D_{2}}\left( Y_{\alpha _{s}}\right) dL_{\alpha _{s}}^{Y}.
\end{equation*}

Perhaps a better understanding of the mirror coupling, based on the analysis
of the local times $L^{X}$ and $L^{Y}$ spent by $X_{t}$ and $Y_{t}$ on the
boundaries of $D_{1}$, respectively $D_{2}$, in connection to the geometry
of the boundaries $\partial D_{1}$ and $\partial D_{2}$ could give a proof
of Chavel's conjecture for some new classes of convex domains, but so far we
were unable to implement it.
\end{remark}

We have chosen to carry out the construction of the mirror coupling
in the case of smooth domains with $\overline{D_{2}}\subset D_{1}$
and $D_{2}$ convex, having in mind the application to Chavel's
conjecture. However, although the technical details can be
considerably longer, it is possible to construct the mirror coupling
in a more general setup.

For example, in the case when $D_{1}$ and $D_{2}$ are disjoint domains, none
of the difficulties encountered in the construction of the mirror coupling
occur (the possibility of coupling/decoupling), so the constructions extends
immediately to this case.

The two key ingredients in our construction of the mirror coupling were the
hypothesis $\overline{D_{2}}\subset D_{1}$ (needed in order to reduce by a
localization argument the construction to the case $D_{1}=\mathbb{R}^{d})$
and the hypothesis on the convexity of the inner domain $D_{2}$ (which
allowed us to construct a solution of the equation of the mirror coupling in
the case $D_{1}=\mathbb{R}^{d}$).

Replacing the first hypothesis by the condition that the boundaries $%
\partial D_{1}$ and $\partial D_{2}$ are not tangential (needed for the
localization of the construction of the mirror coupling) and the second one
by condition that $D_{1}\cap D_{2}$ is a convex domain, the arguments in the
present construction can be modified in order to give rise to a mirror
coupling of reflecting Brownian motion in $\left( D_{1},D_{2}\right) $.


\begin{figure}[thb]
\begin{center}
\includegraphics[scale=0.6]{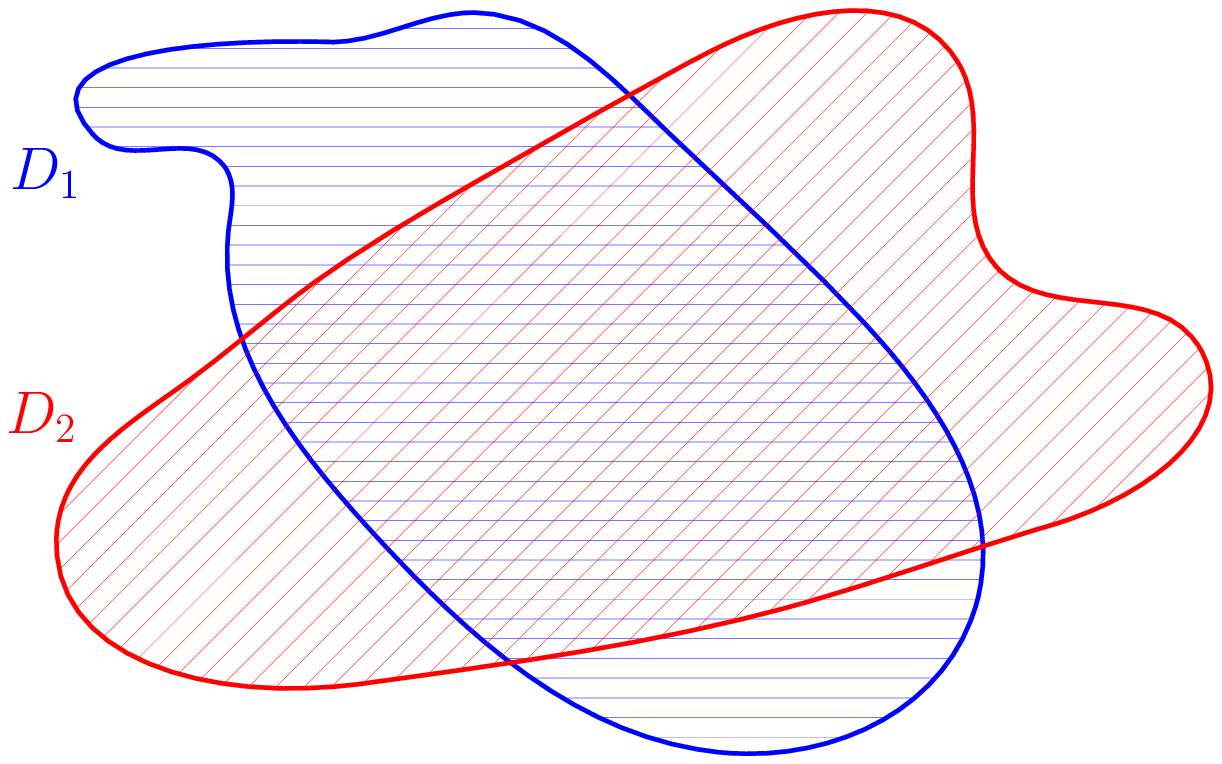}
\caption{Generic smooth domains $%
D_{1,2}\subset \mathbb{R}^{d}$ for the mirror coupling:
$D_{1},D_{2}$ have non-tangential boundaries and $D_{1}\cap D_{2}$
is a convex domain.} \label{fig3}
\end{center}
\end{figure}

We conclude with some remarks on the non-uniqueness of the mirror coupling
in general domains. To simplify the ideas, we will restrict to the $1$%
-dimensional case when $D_{2}=\left( 0,\infty \right) \subset D_{1}=\mathbb{R%
}$.

Fixing $x\in (0,\infty )$ as starting point of the mirror coupling $\left(
X_{t},Y_{t}\right) $ in $\left( D_{1},D_{2}\right) $, the equations of the
mirror coupling are
\begin{eqnarray}
X_{t} &=&x+W_{t} \\
Y_{t} &=&x+Z_{t}+L_{t}^{Y} \\
Z_{t} &=&\int_{0}^{t}G\left( Y_{s}-X_{s}\right) dW_{s}
\end{eqnarray}%
where in this case%
\begin{equation*}
G\left( z\right) =\left\{
\begin{tabular}{ll}
$-1,\qquad $ & if $z\neq 0$ \\
$+1,$ & if $z=0$%
\end{tabular}%
\right. .
\end{equation*}

Until the hitting time $\tau =\left\{ s>0:Y_{s}\in \partial
D_{2}\right\} $ of the boundary of $\partial D_{2}$ we have
$L_{t}^{Y}\equiv 0$, and with the substitution
$U_{t}=-\frac{1}{2}\left( Y_{t}-X_{t}\right) $, the
stochastic differential for $Y_{t}$ becomes%
\begin{equation}
U_{t}=\int_{0}^{t}\frac{1-G\left( Y_{s}-X_{s}\right) }{2}dW_{s}=\int_{0}^{t}%
\sigma \left( U_{s}\right) dW_{s},  \label{degenerate SDE}
\end{equation}%
where%
\begin{equation*}
\sigma \left( z\right) =\frac{1-G\left( z\right) }{2}=\left\{
\begin{tabular}{ll}
$1,\qquad $ & if $z\neq 0$ \\
$0,$ & if $z=0$%
\end{tabular}%
\right. .
\end{equation*}

By a result of Engelbert and Schmidt (\cite{Engelbert-Schmidt}) the solution
of the above problem is not even weakly unique, for in this case the set of
zeroes of the function $\sigma $ is $N=\{0\}$ and $\sigma ^{-2}$ is locally
integrable on $\mathbb{R}$.

In fact, more can be said about the solutions of (\ref{degenerate SDE}) in
this case. It is immediate that both $U_{t}\equiv 0$ and $U_{t}=W_{t}$ are
solutions to \ref{degenerate SDE}, and it can be shown that an arbitrary
solution can be obtained from $W_{t}$ by delaying it when it reaches the
origin (sticky Brownian motion with sticky point the origin).

Therefore, until the hitting time $\tau $ of the boundary, we obtain
as solutions
\begin{equation}
Y_{t}=X_{t}=x+W_{t}  \label{sol1}
\end{equation}%
and
\begin{equation}
Y_{t}=X_{t}-2W_{t}=x-W_{t},  \label{sol2}
\end{equation}%
and an intermediate range of solutions, which agree with (\ref{sol1}) for
some time, then switch to (\ref{sol2}) (see \cite{Pascu}).

Correspondingly, this gives rise to mirror couplings of reflecting Brownian
motions for which the solutions stick to each other after they have coupled
(as in (\ref{sol1})), or they immediately split apart after coupling (as in (%
\ref{sol2})), and there is a whole range of intermediate
possibilities. The first case can be referred to as \emph{sticky
}mirror coupling, the second as \emph{non-sticky} mirror coupling,
and the intermediate possibilities as \emph{weak/mild sticky} mirror
coupling.

The same situation occurs in the general setup in $\mathbb{R}^{d}$,
and it is the cause of lack uniqueness of the stochastic
differential equations which define the mirror coupling. In the
present paper we detailed the construction of the sticky mirror
coupling, which we considered to be the most interesting, both from
the point of view of constructions and of the applications, although
the other types of mirror coupling might prove useful in other
applications.

\section*{Acknowledgement} I would like to thank Krzysztof Burdzy and
Wilfried S. Kendall for the helpful discussions and the
encouragement to undertake the task of the present project.

\end{document}